\numberwithin{equation}{section}
\newtheorem{theorem}{Theorem}[section]
\newtheorem{prop}[theorem]{Proposition}
\newtheorem{corol}[theorem]{Corollary}
\theoremstyle{definition}
\newtheorem{defin}[theorem]{Definition}
\theoremstyle{remark}
\def\tO{\tilde{\mathcal O}}		\def\tX{\tilde X}	
\def\tkC{\tilde{\mathcal C}}		\def\tS{\tilde{\mathcal S}}	\def\tio{\tilde\iota}
\def\hpi{\bar\pi}			\def\tbS{\tilde{\mathbf S}}	\def\tA{\tilde{\mathcal A}}
\def\hT{\overline{\mathcal T}}
\def\lb{\textup{(}}	\def\rb{\textup{)}}
\def\bla{\boldsymbol{\la}}
      \def\Th{\Theta}
       \def\th{\theta}
\def\al{\alpha}       \def\de{\delta}
        \def\eps{\varepsilon}
\def\la{\lambda}      
      \def\io{\iota}
\def\mA{\mathbb A} \def\mN{\mathbb N}
 \def\mP{\mathbb P}
\def\mI{\mathbb I}
 \def\mZ{\mathbb Z}
\def\aK{\mathbbm k}
\def\dB{\mathfrak B}
\def\dE{\mathfrak E} 
\def\dF{\mathfrak F}
 \def\dX{\mathfrak X}
\def\kA{\mathcal A} \def\kN{\mathcal N}
 \def\kO{\mathcal O}
\def\kC{\mathcal C}
\def\kF{\mathcal F} \def\kS{\mathcal S}
\def\kG{\mathcal G} \def\kT{\mathcal T}
\def\kJ{\mathcal J} 
\def\kK{\mathcal K} 
\def\kL{\mathcal L}
 \def\bN{\mathbf N}
\def\bF{\mathbf F} \def\bS{\mathbf S}
\def\bG{\mathbf G}
 \def\fP{\mathbf p}
\def\fK{\mathbf k} 
\def\fM{\mathbf m} 
\def\b1{\mathbf 1}		\def\b1{\mathbf 2}
\def\b1{\mathbf 3}		\def\b1{\mathbf 4}
\def\b1{\mathbf 5}		\def\b1{\mathbf 6}
\def\b1{\mathbf 7}		\def\b1{\mathbf 8}
\def\b1{\mathbf 9}		\def\b1{\mathbf 0}
\def\rA{\mathrm A}
\def\lb{\textup{(}}	\def\rb{\textup{)}}
\def\sb{\subset}		\def\sbe{\subseteq}
\def\xx{\times}		\def\*{\otimes}
\def\+{\oplus}		\def\bop{\bigoplus}
\def\cen{\mathop\mathrm{center}\nolimits}
\def\len{\mathop\mathrm{length}\nolimits}
\def\sng{\mathop\mathrm{sg}\nolimits}
\def\tsg{\mathop\mathrm{\widetilde{sg}}\nolimits}
\def\vb{\mathop\mathrm{VB}\nolimits}
\def\hom{\mathop\mathrm{Hom}\nolimits}
\def\coh{\mathop\mathrm{Coh}\nolimits}
\def\End{\mathop\mathrm{End}\nolimits}
\def\spec{\mathop\mathrm{spec}\nolimits}
\def\Mat{\mathop\mathrm{Mat}\nolimits}
\def\rad{\mathop\mathrm{rad}\nolimits}
\def\END{\mathop{\mathcal E\!\mathit{nd}}\nolimits}
\def\Hom{\mathop{\mathcal H\!\mathit{om}}\nolimits}
\def\rep{\mathop\mathrm{rep}}
\def\rk{\mathop\mathrm{rk}\nolimits}
\def\et{\mathop\mathrm{ex}}
\def\supp{\mathop\mathrm{supp}}
\def\NC{noncommutative curve}
\def\nnc{noncommutative nodal curve}
\def\NV{noncommutative variet}
\def\VB{vector bundle}
\def\oc{one-to-one correspondence }
\def\iff{if and only if }
\def\set#1{\left\{\,#1\,\right\}}
\def\setsuch#1#2{\left\{\,#1\mid #2\,\right\}}
\def\row#1#2{\left( #1_1 , #1_2 , \dots , #1_{#2} \right)}
\def\lst#1#2{#1_1 , #1_2 , \dots , #1_{#2}}
\def\mtr#1{\begin{pmatrix}#1\end{pmatrix}}
\def\ito{\stackrel\sim\to}	\def\mps{\mapsto}
\def\esim{\approx}
\title{Vector bundles over noncommutative\\ nodal curves}
\author{Yuriy A. Drozd}
\author{Denys E. Voloshyn}
\address{Institute of Mathematics, Tereschenkivska 3, 01601 Kiev, Ukraine}
\email{y.a.drozd@gmail.com, drozd@imath.kiev.ua}
\email{denys\_vol@ukr.net}
\urladdr{www.imath.kiev.ua/$\sim$drozd}
\begin{document}

\begin{abstract}
  We describe \VB s over a class of \NC s, namely, over \nnc s of string type and of 
almost string type. We also prove that in other cases the classification of \VB s over a 
\NC\ is a wild problem.
\\
\\
{\selectlanguage{ukrainian}
  Описано векторні розшарування над деяким класом некомутативних кривих, а саме, над 
нодальними некомутативними кривими струнного та майже струнного типу. Встановлено також, 
що в інших випадках класифікація векторних розшарувань над некомутативною кривою є дикою 
задачею.}
\end{abstract}

{\selectlanguage{ukrainian}
 УДК 512.723 }
\\

\maketitle


 Classification of \VB s over algebraic curves is a popular topic in modern 
mathematical literature. It is due to their importance for many branches of mathematics 
and mathematical physics. Vector bundles over the projective line were described by 
Birkhoff 
\cite{bi} and Grothendieck \cite{gr}, \VB s over elliptic curves were classified by 
Atiyah \cite{at}. In the paper \cite{dg} Greuel and the first author described \VB s over 
a class of singular curves (line configurations of types $\rA$ and $\tilde{\rA}$) 
and showed that in all other cases a complete classification of \VB s is a ``wild 
problem'' in the sense of representation theory of algebras. 

 This paper is devoted to analogous questions for \emph{noncommutative curves}. Perhaps, 
the first results in this direction were obtained by Geigle and Lenzing \cite{gl} who 
considered the so called \emph{weighted projective lines}. Though the original definition 
of this paper was in the frames of ``usual'' (commutative) algebraic geometry, these 
curves are actually of noncommutative nature. They can be considered as such 
noncommutative curves that the underlying algebraic curve is a projective line and all 
localizations of the structure sheaf are hereditary. In some sense, it is the simplest 
example of noncommutative curves, though their theory is far from being simple.

 We consider the ``next step,'' namely the case when the localizations of the structure 
sheaf are \emph{nodal} in the sense of \cite{bd}. In particular, this class contains all 
line configurations in the sense of \cite{dg}. We reduce the description of \VB s over 
such curves to the study of a bimodule category in the sense of \cite{ds,dg}. Using this 
reduction, we describe \VB s in two cases: \emph{string type} and \emph{almost string 
type}, see Sections \ref{s3} and \ref{s4}. Note that the string type is an immediate 
generalization of line configurations of types $\rA$ and $\tilde\rA$.
 The main tool in this description is a special 
sort of bimodule problems, namely, the so called \emph{bunches of chains}. Fortunately, 
these problems are well elaborated and a good description of representations is given in 
\cite{bon}. We also show that in all other cases the classification of \VB s is a wild 
problem (Section \ref{s5}). Thus, in some sense, the question about the ``representation 
type'' of the category of \VB s over noncommutative curves is completely solved.

\section{Noncommutative curves, vector bundles\\ and categories of triples}
\label{s1}

 We call a \emph{\NV y} a pair $(X,\kA)$, where $X$ is an algebraic 
variety over an algebraically closed field $\aK$ (reduced, but maybe reducible) and 
$\kA$ is a sheaf of $\kO_X$-algebras which is coherent as a sheaf of 
$\kO_X$-modules. We often speak about a ``\NV y $\kA$'' not mentioning explicitly the 
underlying variety $X$. We denote by $\kK_X$ (or 
$\kK$) the sheaf of total rings of fractions of $\kO_X$ (it is locally constant) and set
$\kK(\kA)=\kA\*_{\kO_X}\kK_X$.  Without loss of generality we may (and usually 
will) suppose that $\kA$ is \emph{central}, i.e. $\kO_{X,x}=\cen(\kA_x)$ for 
each $x\in X$. Otherwise we can replace $X$ by the variety $X'=\spec\kC$, where 
$\kC=\cen(\kA)$. We define a \NC\ as a \NV y $(X,\kA)$ such that $X$ is a curve 
(that is all its components are $1$-dimensional) and $\kA$ is reduced, that is has no 
nilpotent ideals.  A coherent sheaf of $\kA$-modules $\kF$ is said to be a 
\emph{vector bundle} over $(X,\kA)$ if it is \emph{locally projective}, i.e. the 
$\kA_x$-module $\kF_x$ is projective for every $x\in X$. We denote by $\vb(X,\kA)$ or by 
$\vb(\kA)$ the category of vector bundles over $(X,\kA)$.

 We call a \NC\ $(X,\kA)$ \emph{normal} if, for every point $x\in X$, the algebra 
$\kA_x$ is a \emph{maximal $\kO_{X,x}$-order}, that is there is no 
$\kO_{X,x}$-subalgebra $\kA_x\sb \kA'\sb \kK_x$ which is also finitely generated as 
$\kO_{X,x}$-module. Since $\kA$ is reduced, there is a normal curve 
$\tX=(X,\tA)$ such that $\kA\sbe\tA\sb \kK_X$. Moreover, $\kA_x=\tA_x$
for almost all $x\in X$ (it follows from \cite{cr}). We call 
$(X,\tA)$ \emph{a normalization} of $X$ and denote by $\sng\kA$ the set of all points 
$x\in X$ such that $\kA_x\ne\tA_x$. Note that 
such a normalization is, as a rule, not unique, though $\sng\kA$ does not depend on the 
choice of normalization. Let $\tkC=\cen(\tA)$, $\tX=\spec\tkC$. We can (and will) 
consider $\tA$ as a sheaf of central $\kO_{\tX}$-algebras, hence consider the 
normalization as the \NC\ $(\tX,\tA)$. The natural morphism of ringed 
spaces $\pi:(\tX,\tA)\to (X,\kA)$ is defined. We also denote by $\tsg\kA$ the
set-theoretical preimage 
$\pi^{-1}(\sng \kA)$. If $\lst{\tX}s$ are the irreducible components of $\tX$, we set
$\tA_i=\tA|_{\tX_i}$, so consider the \NC s $(\tX_i,\tA_i)$. We also set 
$\tsg_i\kA=\tsg\kA\cap \tX_i$.
 Let $X_i=\pi(\tX_i)$. Certainly, each $X_i$ is an irreducible component of $X$, but 
these components need not be different. We set $\kK_i(\kA)=\kK(\kA)|_{\tX_i}$.
It is a constant sheaf of central $\kK_{X_i}$-gebras.  Since 
$\aK$ is algebraically closed, the Brauer group of the field $\kK_i=\kK_{X_i}$ is trivial 
\cite[Chapter II, \S\,3]{ser}, so $\kK_i(\kA)\simeq\Mat(n_i,\kK_i)$ for some $n_i$. We 
call a \NC\ $(X,\kA)$ \emph{rational} if so is the curve $X$, i.e. all components of 
$\tX$ are isomorphic to the projective line $\mP^1$.

 For calculation of vector bundles over \NC s one can use the 
``sandwich procedure,'' just as it has been done in \cite{dg} in the commutative case. 
Let $\pi:(\tX,\tA)\to (X,\kA)$ be a normalization of a \NC\ $(X,\kA)$. We denote by 
$\kJ$ the \emph{conductor of $\tA$ in $\kA$}, that is the 
maximal sheaf of $\tA$-ideals contained in $\kA$. We consider the \NV ies 
$(\sng\kA,\kS)$ and $(\tsg\kA,\tS)$, where $\kS=\kA/\kJ$ and 
$\tS=\tA/\kJ$. 
 These varieties are $0$-dimensional and usually not reduced. We denote by 
$\hpi:(\tsg\kA,\tS)\to (\sng\kA,\kS)$ the restriction of $\pi$ onto $(\tsg\kA,\tS)$ and 
by $\io$ and $\tio$, respectively, the closed embeddings $(\sng\kA,\kS)\to(X,\kA)$ and 
$(\tsg\kA,\tS)\to(\tX,\tA)$. So we have a commutative diagram of morphisms of \NV ies
\[
  \xymatrix@=1ex{  (\tsg\kA,\tS) \ar[rr]^{\tio} \ar[dd]_{\hpi} && (\tX,\tA) \ar[dd]^{\pi} 
\\  \\			  (\sng\kA,\kS) \ar[rr]^{\io} && (\tX,\tA)	}\]
 Since $(\sng\kA,\kS)$ and $(\tsg\kA,\tS)$ are $0$-dimensional, coherent sheaves on them 
can be identified with finitely generated modules over the algebras, respectively, 
\[
   \bS=\prod_{x\in \sng\kA}\kA_x/\kJ_x\ 	\text{ and }\
 \tbS=\prod_{y\in\tsg\kA}\tA_y/\kJ_y. 
\]

 Following \cite{bd,bd2}, we introduce the \emph{category of triples} $\kT(\kA)$ as 
follows. 
\begin{itemize}
\item 
 The objects of $\kT(\kA)$ are triples $(\kG,P,\th)$, where
\begin{itemize}
\item 
  $\kG$ is a vector bundle over $\tA$,
\item  
 $P$ is a vector bundle over $\kS$, or, the same, a finitely generated projective 
$\bS$-module,
\item  
 $\th$ is an isomorphism $\hpi^*P\to\tio^*\kG$, or, the same, an isomorphism of 
$\tbS$-modules $\tbS\*_\bS P\to \prod_{y\in\tsg\kA}\kG_y/\kJ_y\kG_y$.
\end{itemize}
\item  
 A morphism $(\kG,P,\th)\to(\kG',P',\th')$ is a pair $(\Phi,\phi)$, where 
$\Phi\in\hom_{\tA}(\kG,\kG')$ and $\phi\in\hom_{\kS}(P,P')$ such that the 
induced diagram
\[
  \xymatrix@C=1em@R=1.5em{ \hpi^*P \ar[rr]^{\hpi^*\phi} \ar[d]_\th && \hpi^*P' 
\ar[d]^{\th'} \\	 \tio^*\kG \ar[rr]^{\tio^*\Phi} && \tio^*\kG' 	}  
\]
 is commutative. 
\end{itemize}
 One easily sees that $\kT(\kA)$ is indeed a full subcategory of a \emph{bimodule 
category} in the sense of \cite{ds}, namely, the category defined by the 
$\vb(\kS)\mbox{-}\!\vb(\tA)$-bimodule $\hom_{\tS}(\hpi^*P,\tio^*\kG)$. It can also be 
considered as the push-out of the categories $\vb(\tA)$ and $\vb(\kS)$ over the category 
$\vb(\tS)$ with respect to the functors $\tio^*$ and $\hpi^*$. So it is an analogue of 
Milnor's construction of projective modules from \cite[\S\,2]{mil}.

 We define the functor $\bF:\vb(\kA)\to\kT(\kA)$, which maps a \VB\ $\kF$ to the triple 
$(\pi^*\kF,\io^*\kF,\th_\kF)$, where $\th_\kF$ is the natural isomorphism 
$\hpi^*\io^*\kF\to\tio^*\pi^*\kF$.  The same considerations as in 
\cite{bd2,dg} give the following result.

\begin{theorem}\label{11}
  The functor $\bF$ induces an equivalence of the categories $\vb(\kA)\ito\kT(\kA)$. The 
inverse functor $\bG:\kT(\kA)\to\vb(\kA)$ maps a triple $(\kG,P,\th)$ to the preimage in 
$\kG$ of the $\bS$-submodule $\th(1\*P)\sbe \tio^*\kG$.
\end{theorem}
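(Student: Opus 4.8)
The plan is to produce $\bG$ as a two-sided quasi-inverse of $\bF$, following the pattern of \cite{bd2,dg}. Everything hinges on the \emph{conductor square}: by definition of $\kJ$ one has $\kJ=\ker(\tA\to\tS)\sbe\kA$ and $\kJ=\ker(\kA\to\kS)$, so the square with vertices $\kA,\tA,\kS,\tS$ is Cartesian, and moreover
\[
   0\to\kA\to\tA\+\kS\to\tS\to0
\]
is an exact sequence of $\kA$-modules, the right-hand map $(\,\tilde a,s\,)\mapsto\tilde a-s$ being surjective because $\tA\to\tS$ already is. Since $\pi$ is finite, $\pi_*$ is exact and compatible with the tensor products occurring below, so all sheaves in sight may be regarded as coherent sheaves on $X$; away from $\sng\kA$ one has $\kA_x=\tA_x$ and $\kJ_x=\kA_x$, hence $\kS_x=\tS_x=0$, and there there is nothing to patch.

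First I would check that $\bF$ and $\bG$ are well defined. For a \VB\ $\kF$ over $\kA$, the sheaves $\pi^*\kF=\tA\*_\kA\kF$ and $\io^*\kF=\kF/\kJ\kF$ are locally projective over $\tA$ and $\kS$, and both $\hpi^*\io^*\kF$ and $\tio^*\pi^*\kF$ are canonically $(\tA/\kJ)\*_\kA\kF$; taking $\th_\kF$ to be this canonical identification, $\bF$ is a well-defined functor. Conversely, for a triple $(\kG,P,\th)$ put $\kF=\bG(\kG,P,\th)$, the preimage of $\th(1\*P)\sbe\kG/\kJ\kG$ under $\kG\twoheadrightarrow\kG/\kJ\kG$. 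Because $\kA=\tA\times_\tS\kS$ and $\th$ is $\tS$-linear, $\kF$ is an $\kA$-submodule of $\kG$; it contains $\kJ\kG$, and $\kF/\kJ\kG\cong\th(1\*P)\cong P$, so $\kF$ is coherent. Its local projectivity over $\kA$ is the one substantial point and is deferred to the local analysis below. Functoriality of $\bG$ on morphisms is immediate from the defining commutative square.

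Next I would compute the two composites. If $\kF$ is locally projective, tensoring the exact sequence displayed above with $\kF$ keeps it exact, which says precisely that the square with vertices $\kF,\ \tA\*_\kA\kF,\ \kF/\kJ\kF,\ (\tA/\kJ)\*_\kA\kF$ is Cartesian; unwinding definitions, this is exactly the assertion $\bG\bF(\kF)=\kF$ (the map $\kF\hookrightarrow\tA\*_\kA\kF$ being injective for the same reason). For the composite $\bF\bG$, the inclusion $\kF\hookrightarrow\kG$ induces $\tA\*_\kA\kF\to\kG$ and, since $\kF\to\kG/\kJ\kG$ has image $\th(1\*P)$ and kernel $\kJ\kG=\kJ\kF$, an isomorphism $\kF/\kJ\kF\ito P$; one must verify that $\tA\*_\kA\kF\to\kG$ is an isomorphism and that the resulting pair is compatible with $\th$ and with morphisms of triples. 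All of this is local on $X$, trivial off $\sng\kA$, and at a point $x\in\sng\kA$ it is the statement of \emph{Milnor patching} for the Cartesian square of rings $\kA_x=\tA_x\times_{\tS_x}\kS_x$ whose leg $\tA_x\to\tS_x$ is surjective \cite[\S\,2]{mil}: the Milnor pullback of a projective $\tA_x$-module and a projective $\kS_x$-module along an isomorphism over $\tS_x$ is a projective $\kA_x$-module, and this assignment sets up an equivalence between such modules and such patching data. Alternatively one may quote the computation of \cite{bd2,dg} verbatim.

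I expect the main obstacle to be organizational rather than conceptual: carefully matching the sheaf-theoretic pull-backs $\pi^*,\io^*,\hpi^*,\tio^*$ with the tensor-product descriptions used above, checking that ``locally projective'' is a genuinely local condition preserved and reflected by the finite base change $\pi$, and keeping track of the $\bS$- versus $\tbS$-module structures along $\hpi$. Once the conductor square and Milnor's lemma are in place, the equivalence $\vb(\kA)\ito\kT(\kA)$ and the stated formula for $\bG$ follow formally.
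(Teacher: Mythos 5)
Your argument is correct and is essentially the proof the paper has in mind: the paper itself only remarks that ``the same considerations as in \cite{bd2,dg}'' apply, and those considerations are exactly your combination of the Cartesian conductor square, reduction to the finitely many points of $\sng\kA$, and Milnor patching of finitely generated projective modules over a pullback of rings with one surjective leg. The single substantial step you defer --- local projectivity of $\bG(\kG,P,\th)$ together with the isomorphism $\tA\*_{\kA}\kF\to\kG$ --- is precisely what Milnor's theorem from \cite[\S\,2]{mil} supplies, so nothing essential is missing.
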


 \section{Nodal curves}
\label{s2}

 \begin{defin}\label{21}
\begin{enumerate}
\item   
  An algebra $R$ over a local commutative ring $O$ of Krull dimension $1$, which is 
finitely generated and torsion free as $O$-module, is said to be \emph{nodal} 
\cite{bd,vol} if the following conditions hold:
 \begin{enumerate}
\item 
  $\End_R(\rad R)=H$ is hereditary.
\item  
  $\rad H=\rad R$ (under the natural embedding of $R$ into $H$).
\item  
       $\len_R(H\*_RU)\le2$ for every simple $R$-module $U$.
\end{enumerate}
 Note that a nodal algebra never has nilpotent ideals, since it holds for any hereditary 
$O$-algebra.

\item  
   A \NC\ $(X,\kA)$ is said to be \emph{nodal} if every algebra $\kA_x\ (x\in X)$ is a 
nodal $\kO_{X,x}$-algebra.
\end{enumerate}
 If $\kA=\kO_X$, so we deal with a ``usual'' (commutative) curve, it means that  all 
singular points of $X$ are nodes (ordinary double points). 
\end{defin}

 We recall the construction of nodal algebras over the ring $O=\aK[[t]]$ from \cite{vol}. 
Up to Morita equivalence such algebra is given by a tuple $\bN=(s;\lst ns;\sim)$, where 
$s$ and $\lst ns$ are positive integers, while $\sim$ is a symmetric relation on the set 
of pairs $\mI=\setsuch{(k,i)}{1\le k\le s,\,1\le i\le n_k}$ satisfying the following 
conditions:
\begin{enumerate}
\item[(N1)]\ 
 $\#\setsuch{(l,j)\in\mI}{(l,j)\sim(k,i)}\le1$ for each pair $(k,i)\in\mI$.
\item[(N2)]\   
 If $(k,i)\sim(k,i)$, then $i<n_k$ and $(k,i+1)\not\sim(l,j)$ for any $(l,j)\in\mI$.
\end{enumerate}
 Namely, define $R(\bN)$ as the subring of $M(\bN)=\prod_{k=1}^s\Mat(n_k,O)$ consisting 
of such collections of matrices $\row As$, where $A_k=(a_{ij}^k)\in\Mat(n_k,O)$, that   
\begin{align}
&  a^k_{ij}\equiv 0 \hskip-1.5ex\pmod t \,\text{ if }\, i>j \,\text{ or }\, i=j-1 
\,\text{ and }\, (k,i)\sim(k,i), \label{e21}\\
&  a^k_{ii}\equiv a^l_{jj} \hskip-1.5ex\pmod t \, \text{ if }\, (k,i)\sim(l,j). 
\label{e22}
\end{align}

\begin{theorem}[\cite{vol}]\label{22}
  \begin{enumerate}
\item 
  Every ring $R(\bN)$ is a nodal $O$-algebra.
\item  
 Every nodal $O$-algebra is Morita equivalent to one of the rings $R(\bN)$.
\item  
 $\rad R(\bN)$ consists of such collections $\row Ak$ that the condition 
\eqref{e21} holds and also $a^k_{ii}\equiv 0 \!\pmod t$ for all $k,i$.
\item  
 The hereditary algebra $H(\bN)=\End_{R(\bN)}(\rad R(\bN))$ consists of such collections 
$\row Ak$ that 
\emph{
\begin{align*}
  a^k_{ij}\equiv 0 \hskip-1.5ex\pmod t\ & \text{ if } i>j, \text{ except the case when } 
\\	&\	i=j-1 \text{ and } (k,i)\sim(k,i).
\end{align*}	}
\item  
 $M(\bN)$ is a maximal order containing $R(\bN)$ such that $J(\bN)=\rad M(\bN)$ 
is the conductor of $M(\bN)$ both in $R(\bN)$ and in $H(\bN)$, and $J(\bN)\sbe\rad 
R(\bN)$.
\item  
 $R(\bN)/J(\bN)$ is the subring of $M/J(\bN)=\prod_{k=1}^s\Mat(n_k,\aK)$ consisting of 
such collections of matrices $\row As$ that
\begin{align*}
&  a^k_{ij}=0 \,\text{ \em if }\, i>j \,\text{ \em or }\, i=j-1 \,\text{ \em and }\, 
(k,i)\sim(k,i), \\
&  a^k_{ii}=a^l_{jj} \, \text{ \em if }\, (k,i)\sim(l,j). 
\end{align*}
\end{enumerate}
 In particular, $R(\bN)$ is hereditary \iff the relation $\sim$ is empty. 
\emph{(Then we write $R=R(s;\lst ns)$.)}
\end{theorem}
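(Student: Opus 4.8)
The plan is to treat the explicit parts \textup{(3)}--\textup{(6)} and the verification in \textup{(1)} as direct computations inside the matrix algebra $M(\bN)=\prod_{k=1}^s\Mat(n_k,O)$, where $O=\aK[[t]]$, and to reserve the genuine work for the Morita-classification statement \textup{(2)}.

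First I would settle \textup{(5)}. As $O$ is a complete discrete valuation ring, each $\Mat(n_k,O)$ is, up to conjugacy, the unique maximal order of $\Mat(n_k,\aK((t)))$, so $M(\bN)$ is a maximal order and its radical is $tM(\bN)=:J(\bN)$. A one-line inspection of \eqref{e21}, \eqref{e22} shows $tM(\bN)\sbe R(\bN)$, and since $R(\bN),H(\bN)\sbe M(\bN)$ the set $J(\bN)$ is automatically a two-sided ideal of both; the inclusion $J(\bN)\sbe\rad R(\bN)$ is then immediate from the description asserted in \textup{(3)}. To see that $J(\bN)$ is the conductor of $M(\bN)$ in $R(\bN)$ (and in $H(\bN)$) I would argue that any two-sided $M(\bN)$-ideal contained in $R(\bN)$ must, block by block, lie in $tM(\bN)$: an element of the $k$-th block having a unit entry generates, under left and right multiplication by elementary matrices of $M(\bN)$, all of $\Mat(n_k,O)$, contradicting the congruences that \eqref{e21}, \eqref{e22} impose on the $k$-th block of $R(\bN)$. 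Parts \textup{(3)}, \textup{(4)}, \textup{(6)} I would extract by working modulo $J(\bN)$: $R(\bN)/J(\bN)$ is the finite-dimensional algebra with the ``diagonal-type'' description \textup{(6)}, its radical is spanned by the strictly-lower-triangular entries and the forced $(i,i{+}1)$-entries, lifting that radical back to $R(\bN)$ gives \textup{(3)}, and computing $\End_{R(\bN)}\rad R(\bN)$ from the explicit form of $\rad R(\bN)$ gives \textup{(4)}.

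Next I would prove \textup{(1)}, i.e. check conditions \textup{(a)}--\textup{(c)} of Definition \ref{21} for $R=R(\bN)$. Condition \textup{(b)}, $\rad H(\bN)=\rad R(\bN)$, drops out of comparing the explicit descriptions \textup{(3)} and \textup{(4)}. For \textup{(a)} I would observe that, thanks to \textup{(N1)} and \textup{(N2)}, each block of $H(\bN)$ is a tiled order whose exponent matrix is of hereditary (staircase) type, so $\rad H(\bN)$ is an invertible ideal and $H(\bN)$ is hereditary. For \textup{(c)}, the simple $R(\bN)$-modules are indexed by the $\sim$-classes of pairs of $\mI$ (by \eqref{e22}), and a direct computation of $H(\bN)\*_{R(\bN)}U$ for each simple $U$, using the explicit forms of $H(\bN)$ and $\rad R(\bN)$, shows that its $R(\bN)$-length equals $1$ when the class of $U$ meets no relation and $2$ otherwise; in particular it never exceeds $2$, and it is precisely here that \textup{(N2)} is needed, to keep a self-related pair from contributing a third composition factor.

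The main obstacle is \textup{(2)}: every nodal $O$-algebra is Morita equivalent to some $R(\bN)$. Since nodality is a Morita invariant I would pass to a basic $R$, set $\gR=\rad R$ and $H=\End_R(\gR)$; by \textup{(a)}, \textup{(b)} this $H$ is a hereditary $O$-order containing $R$ with $\rad H=\gR$. Invoking the classification of hereditary orders over a complete discrete valuation ring, I would write $H$, up to Morita equivalence, as a product of standard ``staircase'' hereditary orders $H_{n_k}\sbe\Mat(n_k,O)$, so that $R$ is realized inside the maximal order $M=\prod_k\Mat(n_k,O)$. It remains to pin down the subalgebra $R\sbe H$: passing to semisimple quotients yields an embedding $R/\gR\hookrightarrow H/\gR=\prod_k\aK^{n_k}$, and condition \textup{(c)} forces each simple component of $R/\gR$ either to coincide with a component of $H/\gR$ or to embed diagonally into exactly two of them. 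This pattern is recorded by a symmetric relation $\sim$ on $\mI$, with \textup{(N1)} coming from the ``at most one partner'' consequence of \textup{(c)} and \textup{(N2)} from the requirement that identifying two rows not destroy heredity of $H$. Lifting idempotents and following the radical filtration of $H$, I would then verify that the congruences cutting $R$ out of $M$ are exactly \eqref{e21}, \eqref{e22} for this $\bN$, whence $R\cong R(\bN)$. The delicate point on which I expect to spend most of the effort is this last matching --- confirming that no congruence beyond \eqref{e21}, \eqref{e22} survives and that the staircase structure of $H$ is faithfully encoded by $\sim$. Granting \textup{(2)}, the final assertion is routine: for empty $\sim$ one has $R(\bN)=\prod_kH_{n_k}$, which is hereditary, while if some pair is related a forced congruence coming from \eqref{e21} or \eqref{e22} makes $\rad R(\bN)$ fail to be an invertible $R(\bN)$-ideal, so $R(\bN)$ is not hereditary.
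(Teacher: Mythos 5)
The first thing to note is that the paper contains no proof of Theorem \ref{22} to compare you against: the theorem is quoted from \cite{vol} (Voloshyn's paper on the structure of nodal algebras), and the present text uses it as an external input. So I can only judge your outline on its own terms. On those terms, the routine parts are in order: establishing (5) from the maximality of $\prod_k\Mat(n_k,O)$ and the fact that its two-sided ideals are exactly the products $\prod_k t^{m_k}\Mat(n_k,O)$, then deducing (3), (4), (6) by reduction modulo $J(\bN)=tM(\bN)$ (using $J(\bN)^2\sbe tR(\bN)$ to see $J(\bN)\sbe\rad R(\bN)$), and checking (1) on the resulting finite-dimensional pictures, is how these verifications are expected to go. One small caveat: your conductor argument ``a unit entry contradicts the congruences in the $k$-th block'' silently assumes that every block carries at least one congruence, which fails for a block with $n_k=1$ and $(k,1)$ unrelated; that degenerate case has to be excluded or treated separately.

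The genuine content of the theorem is (2), and there your text is a plan rather than a proof, with two concrete gaps. First, after passing to a basic $R$ you write $H/\gR=\prod_k\aK^{n_k}$, i.e.\ you assume the hereditary order $H=\End_R(\gR)$ is itself basic; a priori $H/\rad H$ is a product of matrix algebras of unknown sizes, and it is precisely condition (c) of Definition \ref{21} (together with $\rad H=\gR$) that must be invoked to control those sizes or to justify a choice of Morita representative for which your description is correct --- this is not addressed. Second, the step you yourself flag as delicate --- showing that, once the gluing pattern of simple components of $R/\gR$ inside $H/\rad H$ is recorded by $\sim$, the conditions (N1), (N2) hold and the subring $R\sbe M$ is cut out by exactly the congruences \eqref{e21}, \eqref{e22} and nothing more --- is the actual theorem of \cite{vol}; saying you would ``verify'' it by lifting idempotents and following the radical filtration names the right tools but does not carry out the argument. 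So the strategy (Brumer--Harada classification of hereditary orders over a complete discrete valuation ring, then analysis of the embedding $R/\gR\hookrightarrow H/\rad H$ constrained by nodality) is the expected one, but as written part (2) remains unproved.
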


 Actually, to define a ring Morita equivalent to $R(\bN)$, one only has to prescribe 
positive integers $m(k,i)$ for each pair $(k,i)\in\mI$ so that $m(k,i)=m(l,j)$ if 
$(k,i)\sim(l,j)$, and consider $a_{ij}^k$ in the definition of $R(\bN)$ not as 
elements of $\aK$, but as matrices from $\Mat(m(k,i)\xx m(k,j),\aK)$, preserving all 
congruences modulo $t$. We denote such data by $(\bN,\fM)$, where $\fM$ is 
the function $(k,i)\mps m(k,i)$, and the corresponding algebra by $R(\bN,\fM)$. Note that 
different data $\bN$ or $(\bN,\fM)$ can describe isomorphic algebras, even if they do not 
only differ by a permutation of indices $(k,i)$. We extend the relation $\sim$ to an 
equivalence relation $\esim$ setting $(k,i)\esim(l,j)$ \iff $(k,i)=(l,j)$ or 
$(k,i)\sim(l,j)$.

 From the well-known properties of torsion free modules over reduced rings of Krull 
dimension $1$ (see, for instance, \cite{cr}) it follows that, given a torsion free 
coherent sheaf $\kF$ over a \NC\ $(X,\kA)$, a finite set of closed points $\lst xm\in X$ 
and a set of coherent $\kA_{x_i}$-submodules $\kG_i\sb\kF\*_{kO_X}\kK$, there is a 
unique coherent sheaf $\kG\sb\kF\*_{kO_X}\kK$ such that $\kG_{x_i}=\kG_i$ and 
$\kG_y=\kF_y$ if $y\ne x_i$ for all $i$. In particular, since almost all localizations 
$\kA_x$ are maximal, one can construct a normalization $\tA$ of $\kA$ locally, choosing 
arbitrary normalizations $\tA_x$ of $\kA_x$ for $x\in\sng\kA$. Therefore, given a nodal 
\NC\ $(X,\kA)$, we can (and will) suppose that the normalizations of its local components 
are chosen as in Theorem \ref{22}. Thus, if $x\in\sng X$, $y\in\pi^{-1}(x)=\set{\lst 
yr}$, we identify $\tA_y$ with a full matrix ring $\Mat(n_y,\kO_{\tX,y})$ and suppose 
that the ring $\kA_x$ is given by some data $(\bN,\fM)$ as above. In what follows, 
we write $(y_k,i)$ instead of $(k,i)$, so the local embeddings 
$\kA_x\to\tA_x=\prod_{k=1}^r\tA_{y_k}$ for 
$x\in\sng\kA$ are described by the data $\bN(\kA)$ consisting of integers $n_y$ and 
$m(y,i)$ for $y\in\tsg\kA,\ 1\le i\le n_y$, and an equivalence relation $\sim$ on the 
set of pairs $(y,i)$ satisfying the above conditions (N1) and (N2) and such that
\begin{enumerate}
\item[(N3)]\  the sum $\fM_y=\sum_{i=1}^{n_y}m(y,i)$ is the same for all points $y$ 
belonging to the same component of $\tX$.
\end{enumerate}
 The last condition just expresses the fact that the sheaf $\kK(\tA)$ is locally 
constant. One easily sees that $\pi(y)=\pi(y')$ \iff there is at least one relation 
$(y,i)\sim(y',j)$. Moreover, if we suppose that $X$ is connected and $\kA$ is central,
the set $\pi^{-1}(x)$ for each $x\in\sng X$ must be connected as the graph defined by the
symmetric relation $y\sim y'$ which means that there is at least one pair $i,j$ such that 
$(y,i)\sim(y',j)$.

 From now on we fix a connected central \nnc\ $(X,\kA)$ and its normalization 
$\pi:(\tX,\tA)\to(X,\kA)$ chosen as described above. We write $\kO$ instead of 
$\kO_X$ and $\tO$ instead of $\kO_{\tX}$. If $\lst \tX s$ are the irreducible components 
of $\tX$, $X_k=\pi(\tX_k)$, we write $\tO_k=\tO_{\tX_k}$, $\tA_k=\tA|_{\tX_k}$, 
$\kO_k=\kO_{X_k}$ and $\kA_k=\kA|_{X_k}$. Recall that the sheaves of rings $\tO_k$ and 
$\tA_k$ are Morita equivalent. Namely, there is a \VB\ $\kL_k$ over $\tA_k$ such that 
$\END_{\tA_k}\kL_k\simeq\tO_k$, $\END_{\tO_k}\kL_k\simeq\tA_k$, so the functors 
$\Hom_{\tA_k}(\kL_k,\_\,)$ and $\kL_k\*_{\tO_k}\!\!\_$ establish an equivalence between 
$\coh(\tA_k)$ and $\coh(\tO_k)$. We call $\kL_k$ a \emph{basic \VB} over $\tA_k$. (Note 
that it is not uniquely defined.)

 Let $\kJ$ be the conductor of $\tA$ in $\kA$. If $x\in\sng\kA$, then 
$\kJ_x=\bop_{\pi(y)=x}\rad\tA_y$, $\tbS_y=\tA_y/\kJ_y\simeq \Mat(\fM_y,\fK)$ and 
$\kL_y/\kJ_y\kL_y\simeq\fM_yU_y$, where $U_y$ is the unique simple $\tbS_y$-module. For 
any \VB\ $\kG$ over $\tA_i$ we define its \emph{rank}: $\rk\kG=r$ if 
$\kG_y/\kJ_y\kG_y\simeq rU_y$ for some (then for any) $y\in \tX_i$.

 Every pair $(y,i)$, where $\pi(y)=x,\ 1\le i\le n_y$, defines a simple 
$\bS_x$-module $V_{y,i}$, where $\bS_x=\kA_x/\kJ_x$, and $V_{y,i}\simeq V_{y',j}$ \iff 
$(y,i)\esim(y',j)$. Moreover, $U_y\simeq\bop_{i=1}^{n_y}V_{y,i}$ as $\bS_y$-module. 
We denote by $P_{y,i}$ the projective $\bS_y$-module such that $P_{y,i}/\rad 
P_{y,i}\simeq V_{y,i}$. In particular, $P_{y,i}\simeq P_{y',j}$ \iff $(y,i)\esim(y',j)$.

 To describe the category of triples $\kT(\kA)$ it is convenient to introduce new symbols 
$e^y_{ij}$, where $1\le i\le j\le n_y$, and the sets $E^{y,i}_{y',j}$ consisting 
of all $e^z_{i'j'}$ such that one of the following conditions hold:
\begin{itemize}
\item 
  $z=y,\ (y,j')\sim(y',j)$ and either $i=i'$ or $(y,i)\sim(y,i')$;
\item  
 $z=y',\ (y',i')\sim(y,i)$ and either $j=j'$ or $(y',j)\sim(y',j')$. 
\end{itemize}
 We also set $e^y_i=\sum_{(z,j)\esim(y,i)}e^z_{jj}$ and consider the copies $U e^y_{ii}$ 
of the simple modules $U_y$. Then
\begin{align*}
  \tS\*_\bS P_{y,i}&\simeq \bop_{(z,j)\esim(y,i)} U^z_j e^z_{jj},\\
  \End_\bS P_{y,i}&\simeq \begin{cases}
  \aK e^y_i & \text{ if } (y,i)\not\sim(y,j) \text{ for any } j\ne i,\\
  \aK e^y_i\+\aK e^y_{ij} & \text{ if } (y,i)\sim(y,j) \text{ and } i<j,\\
  \aK e^y_i\+\aK e^y_{ji} & \text{ if } (y,i)\sim(y,j) \text{ and } j<i.
\end{cases}\\
\intertext{and, for $(y,i)\not\sim(y',j)$,}
  \hom_\bS(P_{y,i},P_{y',j})&\simeq \bop_{E^{y,i}_{y',j}} \aK e^z_{i'j'}.
\end{align*}
 Under such notations the maps $\tS\*_\bS P_{y,i}\to \tS\*_\bS P_{y',j}$ induced by the 
homomorphisms $P_{y,i}\to P_{y',j}$ as well as the multiplication of homomorphisms are 
given by the ``matrix multiplication'' on the right, i.e. by the rules:
\[
  e^y_{ii'} e^{y'}_{j'j}=\begin{cases}
  0 & \text{if } y\ne y' \text{ or } i'\ne j',\\
  e^y_{ij} &\text{if } y=y' \text{ and } i'=j'.
\end{cases}  
\]

 Let $(\kG,P,\th)$ be a triple from $\kT(\kA)$. Decompose 
$\kG$ and $P$:
\begin{itemize}
\item 
  $\kG=\bop_{k,l} g_{kl}\kG_{kl}$, where $\kG_{kl}$ are nonisomorphic 
indecomposable \VB s over $\tA_k$,
\item  
 $P=\bop_{y,i}p_{y,i} P_{y,i}$.
\end{itemize}
 Set $r_{kl}=\rk\kG_{kl}$. Then the isomorphism $\th:\hpi^*P\to\tio^*\kG$ is given by a 
set $\Th=\setsuch{\Th_y}{y\in\tsg\kA}$ of invertible 
block matrices $\Th_y=(\Th^{y,i}_{kl})$, where $y\in\tsg_k\kA$, the block 
$\Th^{y,i}_{kl}$ has coefficients from $\aK$ and is of size $r_{kl}g_{kl}\xx p_{y,i}$.  
If another triple $(\kG',P',\th')$ is given by the matrices 
$\Th'_y$, a morphism $(\kG,P,\th)\to(\kG',P',\th')$ is given by block matrices 
$\Phi_k=(\Phi^{kl}_{kl'})$ and $\phi_y=(\phi^{y,i}_{y,j})$ such 
that $\Phi_k(y)\Th_y=\Th'_y\phi_y$ for every $y\in\tsg_k\kA$, where the elements of 
$\Phi^{kl}_{kl'}$ are from $\hom_{\tA_k}(\kG_{kl},\kG_{kl'})$, elements of 
$\phi^{y,i}_{y,j}$ are from $\aK$, $\phi^{y,i}_{y,i}=\phi^{y',j}_{y',j}$ if 
$(y,i)\sim(y',j)$ and $\phi^{y,i}_{y,j}=0$ if $i>j$ or $i=j-1,\,(y,i)\sim(y,i)$. This 
morphism is an isomorphism \iff all ``diagonal'' blocks $\Phi^{kl}_{kl}$ and 
$\phi^{y,i}_{y,i}$ are invertible. 

 Let $\kN(\kA)$ be the ideal in $\kT(\kA)$ consisting of all morphisms $(\Phi,\phi)$ such 
that all values $\Phi_k(y)$, where $y\in\tsg_k\kA$, are zero. In the matrix presentation 
it means that $\Phi^{kl}_{kl'}(y)=0$ for all possible triples $(k,l,l')$ and all 
$y\in\tX_k$. Denote $\hT(\kA)=\kT(\kA)/\kN(\kA)$. These categories have the same objects 
and the natural functor $\kT(\kA)\to\hT(\kA)$ is full (not faithful), maps nonisomorphic 
objects to nonisomorphic and indecomposable objects to indecomposable. Therefore, to 
obtain a classification of \VB s, we actually have to study the category $\hT(\kA)$. 
Nevertheless, passing from $\kT$ to $\hT$ we can lose some information. It is 
important, for instance, if we are looking for \emph{stable} \VB s (see, for instance, 
\cite{bod}).

 \section{String case}
\label{s3}

 \begin{defin}\label{31}
  A \nnc\ $(X,\kA)$ is said to be of \emph{string type} if it is rational and every 
set $\tsg_k\kA$ contains at most $2$ points.

 If $(X,\kA)$ is of string type, we identify all components $\tX_k$ with $\mP^1$ and fix 
an affine part $\mA^1\sb\tX_k$ containing $\tsg_k\kA$ .
\end{defin}

 In this case the category of triples $\kT(\kA)$ can be treated as the category of 
representations of a certain \emph{bunch of chains} $\dB(\kA)$ in the sense of 
\cite[Appendix B]{bd}.\!%
\footnote{\,Or a \emph{bundle of semi-chains} in the terms of \cite{bon}.}
 Namely, if $\kL_k$ is a basic vector bundle over $\tA_k$, then every 
indecomposable vector bundle over $\tA_k$ is isomorphic to $\kL_k(d)$ 
for some $d$, which is called the \emph{degree} of $\kL_k(d)$.\!%
\footnote{\,Note that it is not the degree of $\kL_k(d)$ as of $\tO_k$-sheaf; the latter 
equals $dn_k$.}
 Moreover, 
 \[
  \hom_{\tA}(\kL_k(d),\kL_k(d'))\simeq
\begin{cases}
  0 & \text{if } d>d',\\
  \aK[t]_{d'-d} & \text{if } d\le d',
\end{cases}  
\]
 where $\aK[t]_m$ denotes the set of polynomials $f(t)$ such that $\deg f(t)\le m$. 
Therefore, in the decomposition of a \VB\ $\kG_k$ over $\tA_k$ we can suppose that 
$\kG_{kl}=\kL_k(l)$. Then the elements of the matrices $\Phi^{kl}_{kl'}$ can be 
considered as the polynomials of degree $l'-l$ if $l'\ge l$; they are zero if $l'<l$. If 
$y\ne y'$ are two points from $\tsg_k\kA$ and $l'>l$, we can always choose a 
polynomial $f(t)\in\aK[t]_{l'-l}$ such that $f(y)=a,\,f(y')=b$ for any prescribed values 
$a,b\in\aK$. It means that the values of the matrices $\Phi^{kl}_{kl'}$ at the points $y$ 
and $y'$ can be prescribed arbitrary. Therefore, the rule $\Phi_k(y)\Th_y=\Th'_y\phi_y$ 
from the matrix description of morphisms in $\kT(\kA)$ can be rewritten as 
$F(y)\Th_y=\Th'_y\phi_y$, where $F(y)$ is an arbitrary lower block triangular matrix 
$F(y)=(F(y)^{kl}_{kl'})$ ($F(y)^{kl}_{kl'}=0$ if $l<l'$) over the field $\aK$ and the 
only restrictions for these blocks is that $F(y)^{kl}_{kl}=F(y')^{kl}_{kl}$ if $y$ and 
$y'$ are in the same component $\tX_k$.

 Thus we define the bunch of chains $\dB(\kA)$ as follows. We consider $\tsg\kA$ as the 
index set of this bunch and for every $y\in\tsg\kA$ set \label{bunch}
\begin{align*} 
&  \dE_y = \setsuch{(y,i)}{1\le i\le n_y}\setminus\setsuch{(y,i)}{(y,i-1)\sim(y,i-1)},\\
 & \dF_y= \setsuch{(d,y)}{d\in\mZ},\\
&  (y,i)<(y,j)\ \text{ if } i<j, \\
 & (d,y)<(d',y)\  \text{ if } d<d',\\
&  (y,i)\sim(y',j)  \text{ \iff they are so in the nodal data $\bN(\kA)$},\\
&  (d,y)\sim(d',z) \text{ \iff } d=d',\,y\ne z \text{ but $y$ and $z$ belong}\\
& \hskip6.5em \text{to the same component } \tX_k.
 \end{align*}
 Recall \cite{bon,bd} that a representation $M$ of this bunch of chains is given by a set 
of block matrices $M_y=(M_{dy}^{yi})$, where $y\in\tsg\kA,\ 1\le i\le n_y$, 
$M_{dy}^{yi}\in\Mat(m_{dy}\xx n_{yi},\aK)$ for some integers $m_{dy},n_{yi}$ such that 
 $m_{dy}=m_{dy'}$ if $(d,y)\sim(d,y')$ and $n_{yi}=n_{y'j}$ if $(y,i)\sim(y',j)$. Here we 
identify the symbols $(y,i)'$ and $(y,i)''$ from \cite[Definition B.1]{bd}, where 
$(y,i)\sim(y,i)$, with the pairs $(y,i)$ and $(y,i+1)$. A morphism $\al:M\to M'$  
 given by a set of block matrices $\al'_y,\al''_y$, where $y\in\tsg\kA$,  
$\al'_y=(\al^{dy}_{d'y})$, $\al''_y=(\al^{yi}_{yi'})$, such that
\begin{align*}
  \al^{dy}_{d'y}&\in \Mat(m_{d'y}\xx m_{dy},\aK),\\
  \al^{yi}_{yi'}&\in \Mat(m_{yi'}\xx m_{yi},\aK),\\
  \al^{dy}_{d'y}&=0\ \text{ if } d>d',\\
  \al^{yi}_{yi'}&=0\ \text{ if } i>i' \text{ or } i'=i+1 \text{ and } (y,i)\sim(y,i),\\
  \al^{dy}_{dy}&=\al^{dy'}_{dy'}\ \text{ if } (d,y)\sim(d,y'),\\
  \al^{yi}_{yi}&=\al^{y'j}_{y'j}\ \text{ if } (y,i)\sim(y',j),\\
\intertext{ and }
 \al'_yM_y&=M'_y\al''_y \ \text{ for all } y\in\tsg\kA.
\end{align*}

 The matrix presentations described above imply the following fact.

\begin{prop}\label{32}
 Let the \nnc\ $(X,\kA)$ is of string type, $\dB=\dB(\kA)$. Then the category 
$\hT(\kA)$ is equivalent to the full subcategory $\rep_0(\dB)$ of the category of 
representations of the bunch of chains $\dB$ consisting of such representations $M$ that 
all matrices $M_y$ are invertible.
\end{prop}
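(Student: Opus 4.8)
The plan is to make explicit the dictionary between triples in $\hT(\kA)$ and representations of $\dB(\kA)$, using the matrix presentations already assembled in Section \ref{s2} and the degree/hom computations for $\tA_k$-bundles in the string case. I will construct a functor in each direction and check they are mutually quasi-inverse, with the image on objects being exactly $\rep_0(\dB)$.

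\begin{proof}[Sketch of proof]
Let $(\kG,P,\th)$ be a triple, decomposed as $\kG=\bop_{k,l}g_{kl}\kG_{kl}$ with $\kG_{kl}=\kL_k(l)$ (as noted, every indecomposable \VB\ over $\tA_k$ has this form, since $(X,\kA)$ is rational), and $P=\bop_{y,i}p_{y,i}P_{y,i}$. We send this triple to the representation $M$ of $\dB=\dB(\kA)$ whose matrices $M_y=(M_{dy}^{yi})$ are read off from $\th$: the block $M_{dy}^{yi}$ is the block $\Th^{y,i}_{k,d}$ of $\Th_y$ indexed by the component $\tX_k\ni y$ and by the summand $\kL_k(d)$. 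The bookkeeping matches: $\dE_y$ enumerates exactly the indices $(y,i)$ that survive once we drop the ``forced-zero'' columns coming from $(y,i-1)\sim(y,i-1)$, which is precisely the combinatorics appearing in the description of $\phi_y$; $\dF_y=\setsuch{(d,y)}{d\in\mZ}$ indexes the rows coming from the $\kL_k(d)$'s; the relation $(d,y)\sim(d,z)$ for $y,z$ on the same $\tX_k$ encodes condition (N3), i.e. the fact that $\rk\kL_k(d)$ (hence the row block size $m_{dy}$) depends only on $d$ and $k$; and $(y,i)\sim(y',j)$ is literally the nodal relation. The size constraints $m_{dy}=m_{dy'}$ and $n_{yi}=n_{y'j}$ under $\sim$ are thus automatic. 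On morphisms: a morphism $(\Phi,\phi):(\kG,P,\th)\to(\kG',P',\th')$ in $\kT(\kA)$ satisfies $\Phi_k(y)\Th_y=\Th'_y\phi_y$, and passing to $\hT(\kA)$ only the \emph{values} $\Phi_k(y)$ matter; by the interpolation argument given before Proposition \ref{32} (a polynomial of degree $\le l'-l$ can take prescribed values at the $\le 2$ points of $\tsg_k\kA$), these values range over \emph{all} lower block-triangular matrices $F(y)=(F(y)^{kl}_{kl'})$ with the single coupling $F(y)^{kl}_{kl}=F(y')^{kl}_{kl}$ when $y,y'\in\tX_k$. Renaming $F(y)$ as $\al'_y$ and $\phi_y$ as $\al''_y$, the conditions listed after the definition of $\dB(\kA)$ — vanishing below the (block) diagonal, the couplings $\al^{dy}_{dy}=\al^{dy'}_{dy'}$ and $\al^{yi}_{yi}=\al^{y'j}_{y'j}$ under $\sim$, and the compatibility $\al'_yM_y=M'_y\al''_y$ — are exactly the surviving relations $\Phi_k(y)\Th_y=\Th'_y\phi_y$. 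This gives a well-defined full and faithful functor $\hT(\kA)\to\rep(\dB)$.

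It remains to identify the essential image with $\rep_0(\dB)$. On one side, for a triple, $\th$ is by definition an \emph{isomorphism} $\hpi^*P\ito\tio^*\kG$, so each $\Th_y$ is invertible, hence each $M_y$ is invertible: the functor lands in $\rep_0(\dB)$. Conversely, given $M\in\rep_0(\dB)$, reconstruct $\kG$ from the row data (take $\kG_k=\bop_d m_{dk}\kL_k(d)$, legitimate since $m_{dy}$ depends only on $(d,k)$), reconstruct $P$ from the column data (take $P=\bop_{y,i}n_{yi}P_{y,i}$, legitimate since $n_{yi}$ respects $\sim$), and let $\th$ be the isomorphism whose block matrices $\Th_y$ are the invertible matrices $M_y$; compatibility with the $\tS$-action over $\tbS$ holds because the defining relations of $\dB$ were built to encode exactly the module structure recorded in the formulas for $\tS\*_\bS P_{y,i}$ and $\hom_\bS(P_{y,i},P_{y',j})$ in Section \ref{s2}. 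Thus every object of $\rep_0(\dB)$ is isomorphic to the image of a triple, and we get the claimed equivalence $\hT(\kA)\simeq\rep_0(\dB)$.
\end{proof}

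The step I expect to be the main obstacle is verifying that the full, not merely faithful, passage from $\kT(\kA)$ to $\hT(\kA)$ is matched precisely by the interpolation step — i.e. that quotienting by $\kN(\kA)$ (killing all $\Phi_k(y)=0$ morphisms) is exactly what makes the space of morphism-values $\Phi_k(y)$ equal to the full lower-triangular space with only the on-diagonal coupling across the two points of a common component. This hinges on the string hypothesis $\#\tsg_k\kA\le 2$: with three or more marked points on some $\tX_k$ one could not prescribe the values of a degree-$(l'-l)$ polynomial freely, the values $\Phi_k(y)$ would satisfy extra linear relations, and the morphism spaces would no longer be those of a bunch of chains. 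Everything else is a careful but routine transcription of the matrix descriptions of $\kT(\kA)$ and of $\rep(\dB)$ into one another, keeping the index conventions (in particular the identification of $(y,i)'$, $(y,i)''$ with $(y,i)$, $(y,i+1)$) consistent.
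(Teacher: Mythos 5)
Your argument is correct and takes the same route as the paper, whose proof of Proposition \ref{32} consists precisely of the matrix presentation of $\hT(\kA)$ from Section \ref{s2} together with the interpolation observation at the (at most two) points of $\tsg_k\kA$; you have simply written out the resulting mutually inverse functors and the identification of the essential image with $\rep_0(\dB)$ explicitly. One harmless slip: the equality $m_{dy}=m_{dy'}$ for $y,y'$ on the same component $\tX_k$ comes from the fact that the multiplicity of $\kL_k(d)$ in the global decomposition of $\kG|_{\tX_k}$ is a single number (and the coupled diagonal blocks are values of constant polynomials), not from condition (N3), which concerns the local data of $\kA$ rather than the bundle $\kG$.
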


 In particular, the category $\hT(\kA)$ and hence the category $\vb(\kA)$ are \emph{tame} 
in the sense that they have at most $1$-parameter families of indecomposable objects. 
Moreover, from the description of representations of a bunch of chains given in 
\cite{bon} one can deduce a description of \VB s over a \nnc\ of string type. For the 
corresponding combinatorics we use the terminology from \cite{bd} adopted to our 
situation.

 \begin{defin}\label{33}
\begin{enumerate}
\item 
 Let $\dE=\bigcup_y\dE_y,\,\dF=\bigcup_y\dF_y,\,\dX=\dE\cup\dF$. We define the symmetric 
relation $-$ on $\dX$ setting $(d,y)-(y,i)$ for all possible $d,i,y$. We also write 
$\xi\parallel \xi'$ if either both $\xi$ and $\xi'$ belong to $\dE$ or both of them 
belong to $\dF$, and $\xi\perp \xi'$ if one of them belongs to $\dE$ while the other 
belongs to $\dF$.
\item  
 We define a \emph{word} (more precisely, an \emph{$\dX$-word}) as a sequence 
$\xi_1r_1\xi_2r_2\dots \xi_{l-1}r_{l-1}\xi_l$ such that	
	\begin{enumerate}
	\item   $\xi_i\in\dX$, $r_i\in\set{\sim,-}$;
	\item  
 	$\xi_ir_i\xi_{i+1}$ for each $1\le i<l$ accordingly to the definition of the relations 
	$\sim$ and $-$;	
	\item  
 	$r_i\ne r_{i+1}$ for all $1\le i<l-1$.
	\end{enumerate}
 We call $l=l(w)$ the \emph{length} of the word $w$ and $\xi_1,\xi_l$ the \emph{ends} of 
this word.
\item  
 We call the word $w$ \emph{full} if the following conditions hold:
	\begin{enumerate} 
	\item 
	either $r_1=\sim$ or $\xi_1\not\sim \xi'$ for any $\xi'\ne \xi_1$;
	\item 
	either $r_{l-1}=\sim$ or $\xi_l\not\sim \xi'$ for any $\xi'\ne \xi_l$.
	\end{enumerate}
\item  
 We call the word $w$ \emph{terminating} if it is full and $r_1=r_{l-1}=-$.
\item  
 The end $\xi_1$ ($\xi_l$) is said to be \emph{special} if $r_1=-$ and $\xi_1\sim\xi_1$ 
(respectively, $\xi_l\sim\xi_l$ and $r_{l-1}=-$). Otherwise it is said to be \emph{usual}.
\item  
 The terminating word $w$ is said to be
	\begin{itemize}
	\item  
 	\emph{usual} if both its ends are usual;
	\item 
	 \emph{special} if one of its ends, but not both, is special; 
	\item  
 	\emph{bispecial} if both its ends are special.
	\end{itemize}
\item  
 The word $w^*=\xi_lr_{l-1}\dots \xi_2r_1\xi_1$ is called \emph{inverse} to the word $w$. 
\item  
 We call $w$ \emph{symmetric} if $w=w^*$ and \emph{quasisymmetric} if it can be presented 
as $v\sim v^*\sim v\sim\dots\sim v^*\sim v$ for a shorter word $v$. Note that a 
quasisymmetric word is always bispecial.
\item  
 The word $w$ is said to be \emph{cyclic} if $r_1=r_{l-1}=\sim$ and $\xi_l-\xi_1$ in 
$\dB$. Then we set $r_0=-$ and $\xi_{i+kl}=\xi_i,\,r_{i+kl}=r_i$ for any $k\in\mZ$.
\item  
 A \emph{shift} of the cyclic word $w$ is the cyclic word 
$$w^{[k]}=\xi_{k+1}r_{k+1}\xi_{k+2}\dots r_0\xi_1r_1\dots \xi_k,$$
 where $k$ is even. In this case we set $\eps(w,k)=(-1)^{k/2}$.
\item  
 The cyclic word $w$ is said to be \emph{aperiodic} if $w^{[k]}\ne w$ for $0<k<l$. It is 
said to be \emph{cyclic-symmetric} if $w^*=w^{[k]}$ for some $k$.
 \end{enumerate}
 Note that the length of a terminating or cyclic word is always divisible by $4$.
\end{defin}

 \begin{defin}\label{34}
  \begin{enumerate}
 \item 
 A \emph{usual string} is a usual nonsymmetric terminating word.
\item  
 A \emph{special string} is a pair $(w,\de)$, where $w$ is a special terminating 
word and $\de\in\set{0,1}$.
\item  
 A \emph{bispecial string} is a quadruple $(w,m,\de_0,\de_1)$, where $w$ is a 
bispecial terminating word that is neither symmetric nor quasisymetric, $m\in\mN$ and 
$\de_i\in\set{0,1}\ (i=0,1)$.
\item  
 A \emph{band} is a triple $(w,m,\la)$, where $w$ is a cyclic word, $m\in\mN$, 
$\la\in\aK^\xx$ and, if $w$ is cyclic-symmetric, also $\la\ne1$.
 \item  
 The following strings are said to be \emph{equivalent}:
	\begin{itemize}
	\item 
	$w$ and $w^*$;
	\item  
 	$(w,\de)$ and $(w^*,\de)$;
	\item  
 	$(w,m,\de_0,\de_1)$ and $(w^*,m,\de_1,\de_0)$.
	\end{itemize}
\item  
 Two bands are said to be \emph{equivalent} if they can be obtained from one another 
by a sequence of the following transformations:
	\begin{itemize}
	\item 
	replacing $(w,m,\la)$ by $(w^{[k]},m,\la^{\eps(w,k)})$;
	\item  
 	replacing $(w,m,\la)$ by $(w^*,m,\la^{-1})$.
	\end{itemize}
 Note that if $w^*=w^{[k]}$, then $k\equiv2\!\pmod4$, so $\eps(w,k)=-1$.
\end{enumerate}
\end{defin}

 Now the results of \cite{bon} imply the following theorem.

\begin{theorem} \label{35}
  The isomorphism classes of indecomposable \VB s over a \nnc\ curve of string type 
$(X,\kA)$ are in \oc\ with the equivalence classes of strings and bands for the bunch of 
chains $\dB(\kA)$. The rank of the \VB\ corresponding to a string or a band equals $l/4$, 
where $l$ is the length of the word $w$ entering into this string or band.
\end{theorem}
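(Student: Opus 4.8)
The plan is to deduce Theorem~\ref{35} from the combinatorial classification of representations of a bunch of chains in \cite{bon} by carefully matching our setup to the normal form there. By Theorem~\ref{11} and Proposition~\ref{32}, it suffices to understand the full subcategory $\rep_0(\dB)$ of $\rep(\dB(\kA))$ consisting of representations all of whose matrices $M_y$ are \emph{invertible}. First I would recall from \cite{bon} that indecomposable representations of an arbitrary bunch of chains are classified by \emph{strings} and \emph{bands} (equivalently, the usual/special/bispecial strings and bands of Definition~\ref{34}), and that each indecomposable is given explicitly as a ``canonical'' matrix presentation attached to the underlying word. The key point is to single out exactly which of these canonical representations land in $\rep_0(\dB)$, i.e.\ for which words $w$ all the structure matrices $M_y$ (indexed by $y \in \tsg\kA$, with rows split according to the chain $\dF_y$ and columns according to the chain $\dE_y$) turn out to be square and invertible.

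The heart of the argument is this selection step. I would argue that, because the relation $-$ on $\dX$ connects \emph{every} element of $\dF_y$ to \emph{every} element of $\dE_y$ and nothing else, a canonical string/band representation has all $M_y$ invertible precisely when the word $w$ is \emph{terminating} in the sense of Definition~\ref{33}(4) — that is, full with $r_1 = r_{l-1} = -$ — or \emph{cyclic}; words with an $\sim$-end or containing a ``free'' end contribute matrices with zero rows or columns, hence non-invertible $M_y$, so those representations are killed in $\rep_0(\dB)$ (they correspond, under $\bG$, to sheaves that are not locally projective, or more precisely are excluded by the invertibility of $\th$). Conversely, for a terminating or cyclic word the alternation condition $r_i \ne r_{i+1}$ together with fullness forces the incidence pattern of each $M_y$ to be a permuted triangular block matrix with invertible diagonal, hence $M_y \in \mathrm{GL}$. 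Thus the indecomposables of $\rep_0(\dB)$ are exactly those attached to terminating and cyclic words, i.e.\ to the strings and bands of Definition~\ref{34}, and the equivalences of strings/bands listed there are exactly the isomorphisms of the corresponding representations (inverse word, cyclic shift, the scalar $\la \mapsto \la^{\pm1}$), which is again read off from \cite{bon}.

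For the rank formula I would use the description in Section~\ref{s3} relating a triple $(\kG,P,\th)$ to its bunch-of-chains representation: the rank of the associated vector bundle $\bG(\kG,P,\th)$ is $\rk\kG = \sum_{k,l} r_{kl} g_{kl}$, which in the representation $M$ equals the total number of ``column blocks'' of size one coming from the $\dE$-side, counted with multiplicity. For a canonical string or band attached to a word $w$ of length $l$, the letters of $w$ alternate between $\dE$ and $\dF$ in blocks dictated by the pattern $-\sim-\sim\cdots$, and since $l$ is divisible by $4$ (the remark after Definition~\ref{33}), exactly $l/4$ of the ``$-$-blocks'' contribute to the $\dE$-side degree-one pieces; a direct bookkeeping on the canonical form gives $\rk = l/4$. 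I would present this as a short computation on the shape of the canonical matrices rather than spelling out every index.

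The main obstacle I anticipate is purely bookkeeping-theoretic rather than conceptual: \cite{bon} (and \cite[Appendix B]{bd}) use a slightly different formalism (``bundles of semi-chains'' versus ``bunches of chains,'' the doubled symbols $(y,i)'$, $(y,i)''$ for the reflexive relations $(y,i)\sim(y,i)$, and their own normalization of canonical forms), so the real work is the dictionary: checking that our $\dB(\kA)$ with the relations $<$, $\sim$, $-$ as defined on page~\pageref{bunch} is literally an instance of their input data, that our identification of $(y,i)'$, $(y,i)''$ with $(y,i)$, $(y,i+1)$ is consistent with their combinatorics of words (so that ``special,'' ``bispecial,'' ``quasisymmetric'' etc.\ mean the same thing), and that the ``invertible $M_y$'' restriction exactly cuts out the sublist of words we claim. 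Once this translation is fixed, the theorem follows formally. I would therefore organize the proof as: (i) invoke Proposition~\ref{32}; (ii) set up the dictionary with \cite{bon}; (iii) identify terminating/cyclic words as the ones surviving in $\rep_0(\dB)$ and match the equivalences; (iv) compute the rank as $l/4$ from the canonical form.
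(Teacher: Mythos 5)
Your proposal is correct and follows essentially the same route as the paper, which proves Theorem~\ref{35} simply by combining Proposition~\ref{32} with the classification of representations of bunches of chains in \cite{bon} (the paper's entire justification is the sentence ``Now the results of \cite{bon} imply the following theorem,'' with the combinatorial dictionary already encoded in Definitions~\ref{33} and \ref{34}). Your steps (i)--(iv) merely make explicit the translation and the selection of terminating/cyclic words by the invertibility of the matrices $M_y$, which the paper leaves implicit.
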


 We refer to \cite{bon} for an explicit construction of representations corresponding 
to strings and bands, hence of \VB s over \nnc s of string type.

 Note that it can so happen that there are no strings or no bands. For instance, 
if all localizations $\kA_x$ are hereditary, there are no bands as well as no special and 
bispecial strings. Then there are only finitely many isomorphism classes of 
indecomposable \VB s \emph{up to twist}, i.e. up to change of degrees $d$ in the pairs 
$(d,y)$ occurring in a word. On the other hand, if each $\tsg_k\kA$ consists of $2$ points 
and for every pair $(y,i)$ there is another pair $(z,j)\ne(y,i)$ such that 
$(z,j)\sim(y,i)$, then there are no terminating strings.

Actually, one can easily deduce the following criterion of finiteness.

\begin{corol}\label{36}
  The following conditions for a \nnc\ of string type $(X,\kA)$ are equivalent:
\begin{enumerate}
\item 
 There  are only finitely many isomorphism classes of indecomposable \VB s over $\kA$ up 
to twist.
\item  
 There are no cycles for the bunch of chains $\dB(\kA)$.
\item  
 There are no sequences of points $\lst yn,y_{n+1}=y_1$ from $\tsg\kA$ such that, for 
$1\le k\le n$, 
	\begin{itemize}
	\item 
  	if $k$ is odd, then the points $y_k$ and $y_{k+1}$ are different and belong to the 
	same component of $\tX$;	
	\item  
 	if $k$ is even, there are indices $i,j$ such that $(y_k,i)\sim(y_{k+1},j)$ \lb possibly 
	$y_k=y_{k+1}$\rb.
	\end{itemize}
\end{enumerate}
\end{corol}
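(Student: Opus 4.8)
The plan is to read all three equivalences off the classification of indecomposable \VB s by strings and bands for the bunch of chains $\dB=\dB(\kA)$, i.e.\ off Theorem \ref{35}. As remarked just before the corollary, ``up to twist'' means ``up to changing the degrees $d$ in the symbols $(d,y)\in\dF$ occurring in the words'', so $\vb(\kA)$ has only finitely many indecomposables up to twist exactly when there are no bands, no bispecial strings (these carry the unbounded parameter $m\in\mN$), and only finitely many terminating words once all degrees are forgotten. I would therefore prove $(2)\Leftrightarrow(3)$ first and then $(1)\Leftrightarrow(2)$.

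For $(2)\Leftrightarrow(3)$ one unwinds Definition \ref{33}(10) and projects a word onto its sequence of ``$y$-coordinates''. Along any word $\sim$ and $-$ strictly alternate; $-$ joins $(d,y)\in\dF$ to $(y,i)\in\dE$ with the \emph{same} $y$; the relation $\sim$ inside $\dF$ joins $(d,y)$ to $(d,z)$ with $y\ne z$ on a single component of $\tX$, which is the odd-indexed clause of (3); and the relation $\sim$ inside $\dE$ joins $(y,i)$ to $(y',j)$ exactly when $(y,i)\sim(y',j)$ in $\bN(\kA)$, with $y=y'$ allowed, which is the even-indexed clause of (3). Hence, forgetting the $\dE/\dF$-pattern and the degrees, a cyclic word of length $l$ becomes a sequence $\lst yn$ with $y_{n+1}=y_1$ and $n=l/2$ exactly as in (3); conversely such a sequence lifts to a cyclic word by inserting the two $-$-steps of each period and putting the degree $0$ on every $\dF$-$\sim$-step, which is legitimate because $-$ joins $(0,y)$ to \emph{every} $(y,i)$ and distinct $\dF$-$\sim$-steps are mutually unconstrained.

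For $(1)\Leftrightarrow(2)$, suppose first that (2) fails and fix a cyclic word $w$; for every $\la\in\aK^\xx$ (with $\la\ne1$ if $w$ is cyclic-symmetric) the triple $(w,1,\la)$ is a band, and since the equivalence of Definition \ref{34}(6) identifies it with only finitely many other bands, Theorem \ref{35} produces infinitely many non-isomorphic indecomposable \VB s; as twisting leaves $\la$ untouched, these stay infinitely many up to twist and (1) fails. Conversely, assume (2), equivalently (3). Then there are no cyclic words, hence no bands. There are no bispecial terminating words either: both ends of such a word sit at self-paired symbols $(y,i)\sim(y,i)$ and $(y',i')\sim(y',i')$, and in $y$-coordinates the word is a walk from $y$ to $y'$ whose first and last steps are $\dF$-steps, so prefixing the loop $(y,i)\sim(y,i)$, suffixing the loop $(y',i')\sim(y',i')$ and then running the reversed walk closes it up into a sequence of type (3) --- which is excluded; so there are no bispecial strings. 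Finally, every terminating word has length bounded in terms of $\#\dE$: a sufficiently long word would repeat a symbol of the finite set $\dE$ in a pattern yielding a closed $\sim/-$-alternating sub-walk, which again projects to a sequence of type (3). Thus, once the degrees are forgotten, there are finitely many terminating words, so finitely many usual and special strings up to twist; by Theorem \ref{35}, (1) holds.

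The implication $(2)\Rightarrow(1)$ is where the real care is needed. One must verify that a closed $\sim/-$-alternating walk in $\dB(\kA)$ collapses to a sequence of the \emph{exact} shape required in (3) --- with length divisible by $4$ and with the wrap-around step of the correct type --- and that a recurring symbol of $\dE$ can always be chosen so that the intervening sub-walk is genuinely alternating at its junction. Both points rest on the fact, coming from Proposition \ref{32}, that the symbols of $\dE$ and $\dF$ occur along every word in blocks of at most two in a fixed alternating pattern ($\dots\dE\dF\dF\dE\dE\dots$), so that in a long word some $\dE$-symbol occupying positions of a fixed residue modulo $4$ must recur. One should also note that a symmetric or quasisymmetric cyclic word obtained this way is still a cyclic word, so (2) genuinely fails; this concerns only the bookkeeping. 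The remaining verifications are direct transcriptions of Definitions \ref{33} and \ref{34}.
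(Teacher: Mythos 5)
Your argument is correct and is essentially the deduction the paper intends: the paper gives no proof, asserting only that the criterion is "easily deduced" from Theorem \ref{35} and the combinatorics of the bunch $\dB(\kA)$, which is exactly what you carry out (cyclic words correspond to sequences of type (3); bands and bispecial strings force infinitude; absence of cycles bounds the length of terminating words, giving finiteness up to twist). The only cosmetic slip is attributing the $\dE\dE\dF\dF$-block pattern of words to Proposition \ref{32}; it follows directly from the definition of the relations $\sim$ and $-$ in $\dB(\kA)$, and this does not affect the argument.
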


\section{Almost string case}
\label{s4}

We consider one more case when there is a good description of \VB s.

\begin{defin}
  A \nnc\ $(X,\kA)$ is said to be of \emph{almost string type} if every set $\tsg_k\kA$ 
contains at most $3$ point, and if it contains three points then for $2$ of them the 
algebra $\kA_{\pi(y)}$ is hereditary and Morita equivalent to the algebra $R(1;2)$ from 
Theorem \ref{22} (with the empty relation $\sim$).
\end{defin} 

 Note that if $\kA_{\pi(y)}$ is hereditary, $y$ is the unique point of $\tsg\kA$ with the 
image $\pi(y)$. Hence, if $X$ is connected, either $\tX$ consists of a unique component 
or there must be another point $z$ on the same component of $\tX$ such that 
$\kA_{\pi(z)}$ is not hereditary.

 Let $\tsg_k\kA=\set{y_0,y_1,y_2}$ so that $\kA_{\pi(y_1)}$ and $\kA_{\pi(y_2)}$ are 
Morita equivalent to $R(1;2)$. In this case we call $y_1,y_2$ \emph{extra points} and 
$y_0$ a \emph{marked point}. Then the horizontal stripes of the matrices 
$\Th_{y_1},\Th_{y_2}$ corresponding to the \VB\ $\kL_k(d)$ can be reduced to the form
\begin{equation}\label{pre}\scriptstyle
  {\textstyle\Th_{kd}^{y_1,1}}=\mtr{0&0\\I&0\\0&0\\0&I}\!,\ 
  {\textstyle\ \Th_{kd}^{y_1,2}}=\mtr{I&0\\0&0\\0&I\\0&0}\!,\ 
  {\textstyle\ \Th_{kd}^{y_2,1}}=\mtr{0&0\\0&0\\I&0\\0&I}\!,\ 
  {\textstyle\ \Th_{kd}^{y_2,2}}=\mtr{I&0\\0&I\\0&0\\0&0}\!,
\end{equation}	
 where $I$ denote identity matrices of some sizes (equal if they are in the same row).
 From now on we only consider the objects from $\kT(\kA)$ such that these matrices have 
the form \eqref{pre}, calling them \emph{precanonical}. If $(\Phi,\phi)$ is a morphism 
between precanonical objects, then the matrix $\Phi^{kd}_{kd}$ must be of  the $4\xx4$ 
block form
\[
  \Phi^{kd}_{kd}=\mtr{*&0&0&0\\ *&*&0&0\\ *&0&*&0\\ *&*&*&*}\!,  
\]
  where stars denote arbitrary matrices of appropriate sizes. Moreover, if we consider 
$\Phi^{k,d-1}_{kd}$ also as a $4\xx4$ block matrix $(f_{ab})\ (a,b\in\set{1,2,3,4})$, 
where the blocks $f_{ab}$ consist of linear polynomials, then 
$f_{14}(y_1)=f_{14}(y_2)=0$, so $f_{14}=0$. Note that the values $f_{ab}(y_0)$ can be 
chosen arbitrary for $(ab)\ne(14)$, as well as the values of $\Phi^{kd'}_{kd}$ for 
$d'<d-1$. Therefore, the full subcategory of $\hT(\kA)$ consisting of precanonical 
objects can again be treated as the category of representations of a bunch of chains 
$\dB'=\dB'(\kA)$. Namely, let $\et\kA$ be the set of all extra points. The index set for 
the bunch $\dB'$ is $\tsg\kA\setminus\et\kA$. If a point $y$ is not marked, the sets 
$\dE_y$ and $\dF_y$ are defined just as in Section \ref{s3} (page \pageref{bunch}). If 
$y$ is marked, the set $\dF_y$ is also defined as in Section \ref{s3}, but the set 
$\dE_y$ consists of the triples $(d,y,\al)$, where $\al\in\set{0,1}$, such that 
\begin{itemize}
\item 
  $(d',y,\al')<(d,y,\al)$ \iff either $d'<d$ or $d'=d$ and $\al'<\al$;
\item  
$(d,y,\al)\sim(d,y,\al)$ for all $d,\al$. 
\end{itemize}
 Actually the element $(d,y,0)$ represents in this bunch the first horizontal row of 
the stripe $(d,y)$ and the fourth horizontal row of the stripe $(d-1,y)$ in the 
precanonical form \eqref{pre}, while the element $(d,y,1)$ represents the second and 
the third horizontal rows of the stripe $(d,y)$.

 The preceding observations imply 
\begin{theorem}\label{41}
 Let $(X,\kA)$ be a \nnc\ of almost string type. The category $\hT(\kA)$ is equivalent to 
the full subcategory of the category of representations of the bunch of chains 
$\dB'(\kA)$ consisting of such representations $M$ that all matrices $M_y$ are invertible.
\end{theorem}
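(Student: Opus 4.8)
The plan is to mirror the argument of Proposition \ref{32} and Theorem \ref{11}, but now for the larger bunch of chains $\dB'(\kA)$ and the precanonical objects. First I would invoke Theorem \ref{11} to identify $\vb(\kA)$ with the category of triples $\kT(\kA)$, and then recall from Section \ref{s2} that passing to $\hT(\kA)=\kT(\kA)/\kN(\kA)$ loses no information relevant to the classification of indecomposables. The key preliminary step is the reduction to \emph{precanonical} objects: I need to check that every object of $\kT(\kA)$ is isomorphic to a precanonical one. This is where the special shape of the almost string hypothesis enters — for each extra point $y_1,y_2$ the algebra $\kA_{\pi(y_j)}$ is Morita equivalent to $R(1;2)$, so the corresponding simple $\bS$-modules $P_{y_j,1},P_{y_j,2}$ are nonisomorphic and $\tS\*_\bS P_{y_j,i}$ is a single copy of the simple $\tbS_{y_j}$-module; since $\kL_k(d)$ has rank $1$ at $y_j$ in the appropriate sense, the two horizontal stripes of $\Th_{y_j}$ for each summand $\kL_k(d)$ can be simultaneously brought to the normal form \eqref{pre} by column operations on $P$ and row operations coming from automorphisms of $\kG$ acting trivially mod $\kJ$ elsewhere. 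One must verify these operations are available, i.e. they lie in $\End_{\tA}\kL_k(d)$ and $\End_\bS P$, and that bringing one summand to the form \eqref{pre} does not disturb the normalization already achieved on others; this is routine block bookkeeping but is the technical heart of the step.

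Next I would compute the endomorphisms and homomorphisms between precanonical objects restricted to the diagonal and subdiagonal degree blocks, exactly as sketched in the text preceding the theorem: the diagonal block $\Phi^{kd}_{kd}$ is forced into the displayed $4\xx4$ lower-triangular-with-one-extra-zero shape, and in the subdiagonal block $\Phi^{k,d-1}_{kd}$ the entry $f_{14}$ vanishes because a linear polynomial vanishing at both $y_1$ and $y_2$ (which are distinct from $y_0$ on $\mP^1$) is zero, while all other entries $f_{ab}(y_0)$ are unconstrained. For degree gaps $d'<d-1$ one uses that the interpolation space $\aK[t]_{d-d'}$ of polynomials of degree $\le d-d'$ surjects onto the values at $y_0$ (and at any other non-marked points of $\tsg_k\kA$), so those blocks are free. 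Assembling this, the data of a precanonical object together with its morphisms mod $\kN(\kA)$ is precisely the data of a representation of $\dB'(\kA)$: the rows of the stripes at marked points split into the two classes recorded by $\al\in\set{0,1}$ as described, the relation $(d,y,\al)\sim(d,y,\al)$ encodes the two independent rows that can interact, and the invertibility of the diagonal blocks $\Phi^{kd}_{kd}$ and $\phi^{y,i}_{y,i}$ translates exactly into the requirement that the matrices $M_y$ be invertible.

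Finally I would check that this correspondence is a functor in both directions and that it is full, faithful and dense on the relevant subcategories — fullness and faithfulness following from the matrix description of morphisms in $\hT(\kA)$ combined with the vanishing/freeness computations above, and density from the precanonical reduction. Composition of morphisms corresponds to matrix multiplication on both sides by construction (this is built into the presentation of $\kT(\kA)$ via the symbols $e^z_{i'j'}$ and the analogous rule for $\dB'$), so functoriality is immediate. The main obstacle I anticipate is the precanonical reduction together with the verification that simultaneously normalizing all summands $\kL_k(d)$ at the two extra points is possible and stable: one must organize the order of operations (say, increasing in $d$) so that automorphisms used to fix stripe $(d,y_j)$ are upper-triangular enough in degree not to undo the normal form on stripes $(d',y_j)$ with $d'<d$, exploiting that $\hom_{\tA}(\kL_k(d),\kL_k(d'))=0$ for $d>d'$. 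Everything else is a direct transcription of the string-type argument of Proposition \ref{32} with the enlarged combinatorial datum $\dB'(\kA)$ in place of $\dB(\kA)$.
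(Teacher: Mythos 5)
Your proposal follows essentially the same route as the paper: reduction of the stripes at the two extra points to the precanonical form \eqref{pre}, the computation that $\Phi^{kd}_{kd}$ has the displayed block-triangular shape and that $f_{14}=0$ in $\Phi^{k,d-1}_{kd}$ while all other values at $y_0$ and all lower-degree blocks are free, and the identification of this matrix data with representations of $\dB'(\kA)$ having invertible $M_y$. The extra care you devote to the precanonical reduction (ordering the summands by degree and using $\hom_{\tA}(\kL_k(d),\kL_k(d'))=0$ for $d>d'$) is exactly the step the paper leaves implicit, so the argument matches the paper's proof.
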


 Just as in Section \ref{s3}, these representations (hence, \VB s over $\kA$) correspond 
to terminating strings and bands. In particular, the category of \VB s over a \nnc\ of 
almost string type is also tame.

\begin{corol}\label{42}
    The following conditions for a \nnc\ of almost string type $(X,\kA)$ are equivalent:
\begin{enumerate}
\item 
 There  are only finitely many isomorphism classes of indecomposable \VB s over $\kA$ up 
to twist.
\item  
 There are no cycles for the bunch of chains $\dB'(\kA)$.
\item  
 There are no sequences of points $\lst yn,y_{n+1}=y_1$ from $\tsg\kA\setminus\et\kA$ 
such that, for $1\le k\le n$, 
	\begin{itemize}
	\item 
  	if $k$ is odd, then either the points $y_k$ and $y_{k+1}$ are different and belong to 
	the same component of $\tX$ or $y_k=y_{k+1}$ is a marked point;	
	\item  
 	if $k$ is even, there are indices $i,j$ such that $(y_k,i)\sim(y_{k+1},j)$ \lb possibly 
	$y_k=y_{k+1}$\rb.
	\end{itemize}
\end{enumerate}
\end{corol}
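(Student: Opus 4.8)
The plan is to follow the proof of Corollary~\ref{36} almost verbatim, with the bunch of chains $\dB(\kA)$ replaced by $\dB'(\kA)$ and Proposition~\ref{32} replaced by Theorem~\ref{41}.

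First I would establish $(1)\Leftrightarrow(2)$. By Theorem~\ref{11} and the properties of the functor $\kT(\kA)\to\hT(\kA)$ recorded in Section~\ref{s2}, isomorphism classes of indecomposable vector bundles over $\kA$ biject with those of $\hT(\kA)$, hence by Theorem~\ref{41} with the indecomposable objects of $\rep_0(\dB'(\kA))$, hence by \cite{bon} with the equivalence classes of terminating strings and bands of $\dB'(\kA)$. A twist $\kF\mapsto\kF(n)$ acts on words by the simultaneous shift $d\mapsto d+n$ of all degrees in their $\dF$-symbols. Since a band $(w,m,\la)$ carries the continuous parameter $\la\in\aK^\xx$ and a bispecial string $(w,m,\de_0,\de_1)$ the parameter $m\in\mN$, a single cyclic word or a single bispecial (non\mbox{-}quasisymmetric) terminating word already produces infinitely many non-isomorphic indecomposables up to twist; conversely, absence of cycles bounds the length of a terminating word by the finite data of $\dB'(\kA)$ and confines, up to an overall shift, the degrees in its $\dF$-symbols to finitely many values, leaving only finitely many usual and special strings up to twist. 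Thus $(1)$ amounts to $\dB'(\kA)$ having no cycles --- recalling that ``cycle'' in the sense of \cite[Appendix~B]{bd} subsumes both the cyclic words giving bands and the special-element configurations giving bispecial strings --- which is $(2)$.

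Next I would prove $(2)\Leftrightarrow(3)$ by unwinding the description of $\dB'(\kA)$ given before Theorem~\ref{41}. In a cyclic or bispecial word the $-$-steps never change the underlying point of $\tX$, while the $\sim$-steps alternate between two kinds: a $\sim$-relation inside some $\dF_y$, of the form $(d,y)\sim(d,z)$ with $y\ne z$ on a common component of $\tX$; and a $\sim$-relation inside some $\dE_y$, which for a non-marked $y$ is precisely a nodal relation $(y,i)\sim(y',j)$ (possibly $y=y'$), and for a marked $y$ one of the self-relations of $\dE_y$ introduced by the reduction \eqref{pre}. Reading off the underlying point along the word yields a closed sequence $\lst yn,\,y_{n+1}=y_1$ in $\tsg\kA\setminus\et\kA$ whose odd steps are moves to a distinct point of the same component or moves that stay at a marked point, and whose even steps are nodal relations $(y_k,i)\sim(y_{k+1},j)$ --- which is exactly $(3)$. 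Conversely, any such sequence is completed to a cyclic word by interpolating the missing $\dF$- and $\dE$-symbols and choosing the degrees equal along each maximal run of consecutive odd steps, which is always possible; so $(3)$ fails iff $(2)$ fails.

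The hard part is the bookkeeping at marked points, which enters both steps. I must verify that the self-related elements filling $\dE_y$ at a marked point $y$ really do force infinitely many indecomposables up to twist --- that together with the $\dF$-chain at $y$ and the structure imposed on the $-$-relation at the marked column by \eqref{pre} they give rise to a genuinely cyclic or bispecial word, not merely to a bounded family of usual or special strings --- and, dually, that on the combinatorial side this is matched by precisely the clause ``$y_k=y_{k+1}$ is a marked point''. This calls for a careful analysis of \eqref{pre} and of the admissible transformations at a marked column; the remainder is a routine transcription of the proof of Corollary~\ref{36} together with citations to \cite{bon,bd}.
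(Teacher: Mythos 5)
Your proposal follows exactly the route the paper intends: the paper gives no explicit proof of Corollary \ref{42}, presenting it, like Corollary \ref{36}, as an immediate consequence of the string/band classification of \cite{bon} applied via Theorem \ref{41}, with cycles accounting precisely for the bands and the bispecial strings that carry infinitely many indecomposables, and with the self-related elements $(d,y,\al)$ at a marked point supplying the extra clause ``$y_k=y_{k+1}$ is a marked point.'' One caveat: ``up to twist'' is defined in the paper (just before Corollary \ref{36}) as up to an \emph{arbitrary} change of the degrees $d$ occurring in a word, not merely a simultaneous shift $d\mapsto d+n$; under your narrower reading the implication $(2)\Rightarrow(1)$ would actually fail, because even an acyclic word may pass through several $\dF$-chains whose degrees are mutually unconstrained, so your claim that acyclicity confines the degrees to finitely many values up to an overall shift is not correct as stated, though it becomes unnecessary once the paper's definition of twist is used.
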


 \section{Wild cases}
\label{s5}

 If a \NC\ $(X,\kA)$ is rational and connected and all localizations $\kA_x$ are 
hereditary, then $X\simeq\mP^1$ and the category $\coh(\kA)$ is equivalent to the 
category of coherent sheaves over a \emph{weighted projective line} $C(\fP,\bla)$ in the 
sense of \cite{gl}. Here $\bla=\set{\lst\la s}=\sng\kA$ and $\fP=\row ps$ 
are the integers such that $\kA_{\la_k}$ is Morita equivalent to the hereditary algebra 
$R(1;p_k)$. Then it is known that $\vb(\kA)$ is of finite type \iff $\sum_{k=1}^s1/p_k>1$ 
and is tame if $\sum_{k=1}^s1/p_k=1$. If $\sum_{k=1}^s1/p_k<1$, it is \emph{wild}. It 
means that the classification of \VB\ over such \NC\ contains the classification of 
representations of every finitely generated $\aK$-algebra (see \cite{dg} for formal 
definitions). Note also that if $(X,\kA)$ is normal, then, just as $X$ itself, it is of 
finite type if $X\simeq\mP^1$, tame if $X$ is an elliptic curve and wild otherwise 
\cite{dg}. So the next theorem completes the answer to the question about the 
representation type of $\vb(\kA)$.

\begin{theorem}\label{51}
  In the following cases the category $\vb(\kA)$ is wild:
\begin{enumerate}
\item 
  $(X,\kA)$ is neither rational nor normal.
\item  
  At least one of the localizations $\kA_x$ is not nodal.
\item  
  $(X,\kA)$ is nodal, at least one of the localizations $\kA_x$ is not hereditary and 
$(X,\kA)$ is neither of string nor of almost string type.
\end{enumerate}
\end{theorem}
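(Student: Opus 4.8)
The plan is to establish wildness by exhibiting, in each case, a subcategory of $\vb(\kA)$ (or equivalently of the category of triples $\kT(\kA)$) that contains a known wild bimodule problem, and then invoke the standard machinery of representation type: a category is wild if it admits a representation embedding from the category of representations of a wild quiver, e.g. the quiver with one vertex and two loops, or equivalently from modules over $\aK\langle x,y\rangle$. Throughout I would work with $\hT(\kA)$, using Theorem~\ref{11} and the reduction to the bimodule category described in Section~\ref{s2}; it suffices to produce a wild bimodule subproblem there, since the functor $\vb(\kA)\to\hT(\kA)$ preserves indecomposability and isomorphism classes, and wildness of a subcategory pulls back.

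For part (1), if $X$ is not normal then there is a non-maximal localization $\kO_{X,x}$ of the underlying \emph{commutative} curve, and the classical result of Greuel--Drozd \cite{dg} already says that $\vb(\kO_X)$ — hence $\vb(\kA)$, which maps onto it by taking centers / restricting along $\kO_X\hookrightarrow\kA$ — is wild unless $X$ is a line configuration of type $\rA$ or $\tilde\rA$; but those configurations are rational, so excluding rationality forces wildness. If $X$ is not rational, then some component $\tX_k$ has genus $\ge1$; if the genus is $\ge2$ the category $\vb(\tA_k)\simeq\vb(\tO_k)$ is already wild by \cite{dg}, and if every component is elliptic one uses that as soon as there is at least one singular point (which there is, unless $(X,\kA)$ is normal, which is also excluded) the category of triples over an elliptic component with nontrivial gluing becomes wild — this is again in the spirit of \cite{dg}. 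For part (2), if some $\kA_x$ fails to be nodal, then by the structure theory (failure of one of (a)--(c) in Definition~\ref{21}) the local category $\vb(\kA_x)$, equivalently the category of lattices over $\kA_x$, contains a wild subcategory; the key input here is the classification of tame versus wild orders over $\aK[[t]]$, and non-nodal reduced orders are wild. One then propagates a wild family of local lattices to a wild family of global vector bundles by fixing the behavior at all other points, using the gluing/sandwich description.

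The heart of the argument is part (3), and this is where the main obstacle lies. Here $(X,\kA)$ is nodal, rational, with some non-hereditary $\kA_x$, but fails to be of string or almost-string type; by Definitions~\ref{31} and the almost-string definition, this means either some $\tsg_k\kA$ contains $\ge4$ points, or it contains exactly $3$ points but not in the special ``$R(1;2)$ twice plus one marked point'' configuration (so at least two of the three local algebras are non-hereditary, or one is non-hereditary and not of the right Morita type). In each such configuration one must show the associated bimodule problem on the component $\tX_k$ — the gluing data at $\tsg_k\kA$ encoded by the matrices $\Th_y$ modulo the morphism relations of Section~\ref{s2} — is no longer a bunch of chains but a genuinely wild matrix problem. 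The concrete task is: for $\ge4$ gluing points on $\mP^1$, the freedom to prescribe values of polynomials of bounded degree at $4$ points simultaneously (versus only $2$ for bunches of chains) yields a four-subspace-type or worse problem, which is tame only in the ``$D_4$''/one-parameter boundary and wild beyond it; and when a non-hereditary algebra at one point contributes a nontrivial partial order with $\sim$ relations on $\ge3$ strands, the combinatorics of $\dB(\kA)$ acquires a subbunch that is not a bunch of \emph{chains} (the partial order is not a disjoint union of chains, or the $\sim$-relation violates (N1)-type one-to-oneness at the bundle level). I would handle this by a finite case analysis of the minimal forbidden configurations — following the pattern of \cite{dg} and of the wildness criteria for bunches of semichains — reducing each to an explicit wild matrix problem such as the pair of $2\times2$ matrices up to simultaneous conjugation, or a known wild tiled-order problem, by a normal-form reduction of the matrices $\Th_y$ at the non-essential points. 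The delicate point is to check that these local wild families genuinely glue to coherent sheaves that are \emph{vector bundles} (locally projective) and not merely torsion-free sheaves, and that distinct parameters give non-isomorphic bundles; this is where one invokes Theorem~\ref{11} in the reverse direction, noting that the functor $\bG$ sends the wild family of triples to a wild family of vector bundles because $\bG$ is an equivalence.
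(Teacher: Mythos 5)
Your overall strategy---pass to the category of triples, isolate the minimal configurations violating the string/almost-string conditions, and exhibit a wild matrix problem in each---is the same as the paper's. But for part (3), which you yourself identify as the heart of the theorem, your text is a plan rather than a proof: you never actually name the minimal forbidden configurations, never write down the resulting matrix problems, and never produce a wildness certificate. The paper does exactly this: it reduces part (3) to two concrete cases, namely (3a) $X=\mP^1$ with three singular points whose localizations are Morita equivalent to $R(1;2)$, $R(1;3)$ and $R(1;2;\sim)$, and (3b) two projective lines meeting transversally with the nodal gluing $R(2;1,1;\sim)$ plus hereditary points of weights $2$ and $3$; it then reduces the matrices $\Th_{y_2},\Th_{y_3}$ to a fixed normal form, reads off the shape of the endomorphism algebra acting on the remaining matrix $\Th_y$, and computes the Tits forms $Q_1,Q_2,Q$ of the resulting bimodule problems, checking $Q_1(2,1,1,1)=Q_2(2,2,1,1,1)=Q(1,3,3,1,1,1,1,1,1,1,1,1)=-1$ subject to the invertibility constraint on $\Th_y$. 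That negative value of the Tits form is the actual wildness argument; nothing in your proposal replaces it.

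Your identification of the forbidden configurations is also off in a way that would derail the case analysis. The minimal wild cases above have exactly \emph{three} points in $\tsg_1\kA$ with exactly \emph{one} non-hereditary localization; what breaks almost-string type there is that one of the two hereditary points has weight $3$ rather than $2$, not that ``at least two of the three local algebras are non-hereditary.'' Likewise, ``$\ge 4$ gluing points gives a four-subspace-type problem'' is not a wildness argument---the four subspace problem is tame ($\tilde D_4$), and four points with all localizations hereditary can perfectly well be tame or finite; wildness only appears once the weights or the nodal relations are combined correctly, which is exactly what the Tits-form computation certifies. Two smaller issues: in part (1) your transfer ``$\vb(\kA)$ maps onto $\vb(\kO_X)$ by taking centers, hence inherits wildness'' goes in the wrong direction---wildness is pushed forward along a representation embedding \emph{into} $\vb(\kA)$, not pulled back along a functor out of it; and in part (2) the assertion that non-nodal reduced orders are wild is the statement to be proved locally and then globalized, which the paper handles by direct analogy with \cite[Proposition 2.5]{dg} rather than by an appeal to a general tame/wild classification of orders.
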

\begin{proof}
  The cases (1) and (2) are considered quite analogously to the commutative case 
\cite[Proposition 2.5]{dg}, so we omit their proofs. The proof of (3) we shall give in 
two cases:
\begin{itemize}
\item[(3a)]
 $X=\mP^1$, $\sng\kA=\set{x,x_2,x_3}$, $\kA_{x_k}$ is Morita equivalent to $R(1;k)$ 
for $k=2,3$, while, $\kA_x$ is Morita equivalent to $R(1;2;\sim)$, where either 
$(1,1)\sim(1,2)$ or $(1,1)\sim(1,1)$. 
\item[(3b)]  
 $X=X_1\cup X_2$ so that $X_1\simeq X_2\simeq\mP^1$, $X_1\cap X_2=\{x\}$ and this 
intersection is transversal (i.e. $\kO_x$ is nodal), there are two more singular points 
$x_2,x_3\in X_1$ and $\kA_{x_k}$ is Morita equivalent to $R(1;k)$ for $k=2,3$, while 
$\kA_x$ is Morita equivalent to ${R(2;1,1;\sim)}$, where $(1,1)\sim(2,1)$.
\end{itemize}  
 All other cases easily reduce to these ones.

 In both cases $\pi^{-1}(x_k)=\{y_k\}$ for $k=2,3$ and the $d$-th 
horizontal stripe of the matrices $\Th_{y_k}$ can be reduced to the form:
\[
 \Th_{2d}=\left(\begin{array}{cc|cc|cc}
  0&0&0&0&I&0\\ 0&0&I&0&0&0\\ I&0&0&0&0&0\\
  0&0&0&0&0&I\\ 0&0&0&I&0&0\\ 0&I&0&0&0&0
\end{array}\right)\!,\ 
  \Th_{3d}=\left(\begin{array}{ccc|ccc}
  0&0&0&I&0&0\\ 0&0&0&0&I&0\\ 0&0&0&0&0&I\\
  I&0&0&0&0&0\\ 0&I&0&0&0&0\\ 0&0&I&0&0&0\\
\end{array}\right)\!,
\]
 where the vertical lines divide these matrices into the stripes corresponding to the 
projective modules $P_{ki}$. In the case (3a) we only consider such triples that 
the 1st, 5th and 6th horizontal rows of these matrices are empty. Then the matrix 
$\Th_y$, where $y\in\tsg_1\kA$ and $\pi(y)=x$, is divided into $3$ horizontal stripes and 
if $(\Phi,\phi)$ is a morphism of such representations, then 
\[
  \Phi^{1d}_{1d} =\mtr{*&0&0\\ *&*&0\\ 0&0&*}.
\]
 The classification of such triples can be considered as a bimodule problem (see 
\cite{ds,dg} for definitions and details) so that the corresponding Tits form is either 
\begin{align*}
 Q_1&= 2t_1^2+z_1^2+z_2^2+z_1z_2+z_3^2-2t_1(z_1+z_2+z_3)\\
\intertext{or}
 Q_2&= t_1^2+t_2^2+z_1^2+z_2^2+z_1z_2+z_3^2-(t_1+t_2)(z_1+z_2+z_3),
\end{align*} 
 where $t_i$ are the sizes of vertical stripes and $z_i$ are the sizes of horizontal 
stripes (if $(1,1)\sim(1,2)$, then $t_1=t_2$). Since $Q_1(2,1,1,1)=Q_2(2,2,1,1,1)=-1$, 
this bimodule is wild, hence so is the category $\vb(\kA)$. Note that we need to check 
that $t_1+t_2=z_1+z_2+z_3$, since the matrix $\Th_y$ must be invertible.

 In the case (3b) we only omit the 1st and the 6th row of the matrices $\Th_{y_k}$. Then 
the matrix $\Phi_{1d}^{1d}$ will be of the form
\[
  \Phi^{1d}_{1d}=\mtr{*&0&0&0\\ *&*&0&0\\ 0&0&*&0\\ *&0&*&*}
\]
We have one more matrix $\Th_z$, where $z\in\tsg_2\kA$ and $\pi(z)=x$. 
We consider the triples such that $\kG|_{Y_2}=\bop_{d=1}^8r_d\kG_{2d}$. The matrix 
$\Th_z$ reduces to the form
\[
  \Th_z=\left(\begin{array}{c|c|c|c|c|c|c|c}
  I&0&0&0&0&0&0&0\\ 0&I&0&0&0&0&0&0\\ 0&0&I&0&0&0&0&0\\ 0&0&0&I&0&0&0&0\\
   0&0&0&0&I&0&0&0\\ 0&0&0&0&0&I&0&0\\ 0&0&0&0&0&0&I&0\\ 0&0&0&0&0&0&0&I
\end{array}\right)
\]
 Then the matrix $\phi^{y,1}_{y,1}=\phi^{z,1}_{z,1}$ from a morphism $(\Phi,\phi)$ of 
such triples must be triangular and we obtain a matrix problem with the Tits form
\[
 Q= t_1^2+t_2^2+t_3^2+t_4^2+t_1t_2+t_1t_4+t_3t_4+\sum_{i\le j}r_ir_j- \sum_{i,j}t_ir_j.  
\]
 Now $Q(1,3,3,1,1,1,1,1,1,1,1,1)=-1$, so we again obtain a wild problem.
\end{proof}

\section{Example}

 We consider a simple but typical example. Let $(X,\kA)$ be defined as follows.
\begin{itemize}
\item 
 $X=X_1\cup X_2$ , where $X_1\simeq X_2\simeq \mP^1$, $X_1\cap X_2=\{x\}$ and the 
intersection is transversal;
\item  
 $\sng\kA=\set{x,x_1,x_2}$, where $x_1\in X_1,\,x_2\in X_2$;
\item  
 $\kK(\kA)=\Mat(2,\kK_1)\xx\Mat(2,\kK_2)$;
\item  
 The singular localizations are:
\begin{align*}
  \kA_x&=R(2;2,2;\sim), \text{ where } (1,1)\sim(2,1);\\
  \kA_{x_1}&=R(1;2;\sim), \text{ where } (1,1)\sim(1,1);\\
  \kA_{x_2}&=R(1;2;\sim), \text{ where } (1,1)\sim(1,2).
\end{align*}
\end{itemize}
 Then 
\begin{itemize}
\item 
 $\tX=\tX_1\cup\tX_2$, where $X_1\simeq X_2\simeq \mP^1$, $X_1\cap X_2=\emptyset$;
\item  
 $\tsg\kA=\set{y_1,y_2,y_3,y_4}$, where $y_1,y_3\in Y_1,\,y_2,y_4\in Y_2$, \\ 
$\pi(y_3)=\pi(y_4)=x,\,\pi(y_1)=x_1,\,\pi(y_2)=x_2$.
\end{itemize}
 Therefore the corresponding bunch of chains is
\begin{align*}
  \dE_1&=\setsuch{(d1)}{d\in\mZ},\ \dF_1=\set{(1,1)},\\
  \dE_2&=\setsuch{(d2)}{d\in\mZ},\ \dF_2=\set{(2,1)<(2,2)},\\
  \dE_3&=\setsuch{(d3)}{d\in\mZ},\ \dF_3=\set{(3,1)<(3,2)},\\
  \dE_4&=\setsuch{(d4)}{d\in\mZ},\ \dF_4=\set{(4,1)<(4,2)},\\
  (1,1)&\sim(1,1),\ (2,2)\sim(2,1),\ (3,1)\sim(4,1),\ (d1)\sim(d3),\ (d2)\sim(d4).
\end{align*}
 (We write $(dk)$ and $(k,i)$ instead of $(d,y_k)$ and $(y_k,i)$.) 
 We fix a basic vector bundle $\kL_k$ over $\tA_k$ ($k=1,2$). Then 
$\kL_1(d)/\kJ\kL_1(d)$ has a $\aK$-basis $e^1_i(d),e^3_j(d),\ (1\le i,j\le2)$ and 
$\kL_2(d)/\kJ\kL_2(d)$ has a $\aK$-basis $e^2_i(d),e^4_j(d),\ (1\le i,j\le 2)$, the 
upper index showing the point $y_k$ where the corresponding element is supported.

 An example of a usual string is given by the word
\[
  (4,2)-(d_14)\sim(d_12)-(2,2)\sim(2,1)-(d_22)\sim(d_24)-(4,2)  
\]
 with $d_1\ne d_2$ in order that the word be not symmetric.
 The corresponding \VB\ $\kF$ is the $\kA$-submodule in $\kG=\kL_2(d_1)\+\kL_2(d_2)$ such 
that $\kF_x=\kG_x$ for $x\notin\sng\kA$, $\kF_{x_2}$ is generated by the 
preimages of $e^2_2(d_1)$ and $e^2_1(d_2)$, and $\kF_{x}$ is generated by the preimages 
of $e^4_2(d_1)$ and $e^4_2(d_2)$. Since $\supp\kG=X_2$, $\kF_{x_1}=0$.

 An example of a special string is $(w,1)$, where.  
\[
  w=(1,1)-(d1)\sim(d3)-(3,2).
\]
 Here $\kG=\kL_1(d)$, $\kF_{x_1}$ is generated by the preimage of $e^1_2$ and $\kF_x$ is 
generated by the preimage of $e^3_2$.

 An example of a bispecial string is $(w,m,1,0)$, where
\begin{align*}
  w&=(1,1)-(d_11)\sim(d_13)-(3,1)\sim(4,1)-(d_24)\sim(d_22)-(2,1)\sim\\
 &\sim(2,2)- (d_32)\sim(d_34)-(4,1)\sim(3,1)-(d_43)\sim(d_41)-(1,1).
\end{align*}
 The degrees $d_i$ can be arbitrary with the only restriction that $d_2\ne d_3$ or 
$d_1\ne d_4$. 
\begin{itemize}
\item 
 $\kG=m(\kL_1(d_1)\+\kL_2(d_2)\+\kL_2(d_3)\+\kL_1(d_4))$;
\item  
 $\kF_{x}$ is generated by the preimages of the columns of the matrices 
$I_me^3_1(d_1),\,I_me^3_1(d_4),\,I_me^4_1(d_2)$ and $I_me^4_1(d_3)$, where $I_m$ denotes 
the identity $m\xx m$ matrix;
\item  
 $\kF_{x_2}$ is generated by the preimages of the columns of the matrices 
$I_me^2_1(d_2)$ and $I_me^2_2(d_3)$;
\item  
 $\kF_{x_1}$ is generated by the preimages of the columns of the matrices 
\[
\hskip2em   \mtr{I_q\\0}\!e^1_2(d_1), \  \mtr{0\\I_{m-q}}\!e^1_1(d_1),\ 
  \mtr{I_q\\A_q}\!e^1_1(d_4)\ \text{ and } \mtr{B_q\\I_{m-q}}\!e^1_2(d_4),
\]
 where $q=[(m+1)/2]$ and
\begin{itemize}
\item   
  if $m=2q$, then $A_q=I_q$, $B_q=J_q(0)$, the Jordan $q\xx q$ matrix with eigenvalue $0$;
\item  
 if $m=2q-1$, then $A_q$ is of size $(q-1)\xx q$ and $B_q$ is of size $q\xx(q-1)$, namely,
\[
\hskip2em   A_q=\mtr{1&0&0&\dots &0&0\\ 0&1&0&\dots &0&0\\
	\hdotsfor6 \\ 0&0&0&\dots&1&0}\!,\quad
  B_q=\mtr{0&0&\dots&0\\ 1&0&\dots &0\\ 0&1&\dots& 0\\
	\hdotsfor4 \\ 0&0&\dots&1}\!.
\]
\end{itemize}
\end{itemize}

 Finally, an example of a band is $(w,m,\la)$, where 
\begin{align*}
  w&=(2,2)\sim(2,1)-(d_12)\sim(d_14)-(4,1)\sim(3,1)-(d_23)\sim(d_21)- \\  
  &-(1,1)\sim(1,1)-(d_31)\sim(d_33)-(3,1)\sim(4,1)-(d_44)\sim(d_42).
\end{align*}
 We suppose that $d_3<d_2$ or $d_3=d_2,\,d_4\le d_1$. Then
\begin{itemize}
\item 
  $\kG=m(\kL_1(d_1)\+\kL_2(d_2)\+\kL_2(d_3)\+\kL_1(d_4))$;
\item  
 $\kF_{x_1}$ is generated by the preimages of the columns of the matrices
\[
    \mtr{I_me^1_1(d_2)\\I_me^1_1(d_3)} \text{ and } \mtr{0\\I_me^1_2(d_3)}\!;
\]
\item  
 $\kF_x$ is generated by the preimages of the columns of the matrices $I_me^4_1(d_1),\, 
I_me^3_1(d_2),\, I_me^3_1(d_3)$ and $I_me^4_1(d_4)$;
\item  
 $\kF_{x_2}$ is generated by the preimages of the columns of the matrices $I_me^2_1(d_1)$ 
and $J_m(\la)e^2_2(d_4)$ (the Jordan $m\xx m$ matrix with eigenvalue $\la$).
\end{itemize}
 If $d_2<d_3$ or $d_2=d_3,\,d_1<d_4$, one has to permute $d_2$ and $d_3$ in the 
generators of $\kF_{x_1}$, also permuting the rows.

\end{document}